\tikzset{
modal/.style={>=stealth,shorten >=1pt,shorten <=1pt,auto,node distance=1.5cm,semithick},
world/.style={circle,draw,minimum size=0.5cm,fill=gray!15},
point/.style={circle,draw,inner sep=0.5mm,fill=black},
reflexive above/.style={->,loop,looseness=7,in=120,out=60},
reflexive below/.style={->,loop,looseness=7,in=240,out=300},
reflexive left/.style={->,loop,looseness=7,in=150,out=210},
reflexive right/.style={->,loop,looseness=7,in=30,out=330}}
\newtheorem{thm}{Theorem}[section]
\newtheorem{cor}[thm]{Corollary}
\newtheorem{lem}[thm]{Lemma}
\newtheorem{dfn}[thm]{Definition}
\newcommand{\fl}{\mbox{$\mathfrak{F}$}}
\newcommand{\AND}{\wedge}
\newcommand{\OR}{\vee}
\newcommand{\IMP}{\rightarrow}
\newcommand{\logicname}[1]{{\sf#1}\xspace}
\newcommand{\f}{\logicname{F}}
\newcommand{\sfour}{\logicname{S4}}
\newcommand{\bpc}{\logicname{BPC}}
\newcommand{\ipl}{\logicname{IPL}}
\newcommand{\klogic}{\logicname{K}}
\newcommand{\wf}{\logicname{WF}}
\newcommand{\propset}{\mathsf{Prop}}
\begin{document}

\title{Sequent calculus for the subintuitionistic logic ${\sf WF_{N_{2}}} $}

\author{\textbf{Fatemeh Shirmohammadzadeh Maleki}\\
Department of Logic, Iranian Institute of Philosophy\\
 Arakelian 4, Vali-e-Asr, Tehran, Iran,~f.shmaleki2012@yahoo.com}

\maketitle

\begin{abstract}
A cut-free G3-style sequent calculus ${\sf GWF_{N_{2}}} $ for the subintuitionistic logic ${\sf WF_{N_{2}}} $, along with its single-succedent variant $ {\sf GWF^{s}_{N_{2}}} $, is introduced. The calculus ${\sf GWF_{N_{2}}} $ is shown to extend naturally to a G3-style of the sequent calculus ${\sf GF}$ for Corsi's logic \f. Additionally, a syntactic proof of the known embedding of ${\sf GWF_{N_{2}}} $ into classical modal logic ${\sf M_{Nec}} $ is presented. 
\end{abstract}

\textbf{Keywords:} Subintuitionistic logic, Sequent calculus, Modal companion.

\section{Introduction}

Subintuitionistic logics, first introduced by Corsi~\cite{Co}, study systems weaker than intuitionistic logic. Corsi proposed the basic system \f within a Hilbert-style proof system, characterized by Kripke frames without reflexivity or transitivity and where the preservation of truth is not assumed. He demonstrated a G\"odel-type translation of \f into the modal logic \klogic, analogous to the translation of \ipl into \sfour. Restall~\cite{a2} introduced a similar system, {\sf SJ}.

Visser~\cite{a3} later extended this work by defining Basic Logic (\bpc) in natural deduction form and proving its completeness for finite, irreflexive Kripke models. Subsequent studies by Ardeshir and Ruitenburg further explored \bpc (see, e.g.,~\cite{Ar0, Ar1, Ar}).

Building on these developments, de Jongh and Shirmohammadzadeh Maleki~\cite{Dic3, Di, FD} introduced weaker subintuitionistic logics based on neighborhood semantics, diverging from traditional Kripkean approaches. Their work emphasized the system \wf, which is significantly weaker than \f, and established correspondences with modal logic through modal companions~\cite{Di, FD6, Dic5}. For example, they proved that the classical modal logic ${\sf M_{Nec}} $ is the modal companion of the subintuitionistic logic ${\sf WF_{N_{2}}} $, which is obtained by adding a rule to \wf and is weaker than the subintuitionistic logic \f~\cite{Di}.

The development of sequent calculi for subintuitionistic logics has been a prominent area of research. Notable contributions include a dual-context sequent calculus for \f by Kikuchi~\cite{kik}, a cut-elimination proof for \f by Ishigaki and Kashima~\cite{ish}, and a labeled sequent calculus by Yamasaki and Sano~\cite{yam}. Aboolian and Alizadeh~\cite{ab} proposed a G3-style sequent calculus for \f to establish Lyndon's and Craig's interpolation properties. Additionally, Tesi~\cite{Tesi} introduced nested calculi for subintuitionistic logics corresponding to modal logics such as {\sf K}, {\sf T}, and {\sf K4}. In~\cite{Fatemeh}, Shirmohammadzadeh Maleki extended the study of subintuitionistic logics by introducing sequent systems for \wf, \f, and intermediate logics between these systems.

In this paper, we extend the study of subintuitionistic logics by introducing new sequent system for the subintuitionistic logic ${\sf WF_{N_{2}}} $. We investigate the admissibility of structural rules, such as weakening, contraction, and cut, and establish equivalences between Hilbert-style and sequent systems. Furthermore, a simple syntactic proof of embedding result of ${\sf WF_{N_{2}}} $ into ${\sf M_{Nec} }$ is provided.

The paper is organized as follows:
Section~\ref{sec:modneigh} reviews axiomatic systems and neighborhood semantics for the subintuitionistic logic ${\sf WF_{N_{2}}}$.
Section~\ref{modneigh1} introduces the sequent calculus for ${\sf WF_{N_{2}}}$, called ${\sf GWF_{N_{2}}}$, demonstrating the admissibility of weakening, contraction, and cut. Moreover, we establish the equivalence between the Hilbert-style proof system and the corresponding sequent calculus system.
In Section~\ref{embedding}, we demonstrate the syntactic embedding of ${\sf WF_{N_{2}}}$ into classical modal logic ${\sf M_{Nec}}$ as a result of the consequence relation.
In Section~\ref{some}, a single-succedent version of ${\sf GWF_{N_{2}}}$ is presented. It is shown to have the same strength as ${\sf GWF_{N_{2}}}$ when applied to formulas of ${\sf WF_{N_{2}}}$.

\section{The subintuitionistic logic ${\sf WF_{N_{2}}} $}\label{sec:modneigh}
In this section, we provide the technical background related to the subintuitionistic logic ${\sf WF_{N_{2}}} $. The results here have been proved before in \cite{Di}.

The language of subintuitionistic logics consists of a countable set of atomic propositions, represented by lowercase letters \( p, q, \dots \) from the set \( \propset \), combined using the connectives \( \vee, \wedge, \rightarrow \), and the propositional constant \( \bot \). Formulas in this language are denoted by uppercase Latin letters \( A, B, C, \dots \). The biconditional \( \leftrightarrow \) is introduced as a shorthand, defined by 
$A \leftrightarrow B \equiv (A \rightarrow B) \wedge (B \rightarrow A)$.

 \begin{dfn}\label{Nframe}
An  \textit{N-neighbourhood Frame} $  \fl\,{=}\,\langle W, N\rangle $  for subintuitionistic logic consists of a non-empty set $W$, and a function
$N$  from $W$ into $ \mathcal{P}(\mathcal{P}(W))$ such that for each $ w \in W,\,W \in N(w) $.
The N-neighborhood frame $  \fl\,{=}\,\langle W, N\rangle $  is closed under
\textit{superset} if and only if for all $ w \in W$, if $X \in N(w)$  and $X\subseteq Y  $,
then $Y \in N(w)$.

\noindent Valuation $ V\!:At\rightarrow \mathcal{P}(W) $ makes $\mathfrak{M}= \langle W, N, V\rangle $ an \textit{\emph{N}-neighborhood Model}. 
Truth of a propositional formula in a world $w$ is defined inductively as follows.
\begin{enumerate}
\item $ \mathfrak{M},w \Vdash p~~~~~~ ~~\Leftrightarrow~~ w \in V(p)$;
\item $ \mathfrak{M},w \Vdash A\wedge B~~\Leftrightarrow~~ \mathfrak{M},w \Vdash A ~{\rm and}~ \mathfrak{M},w \Vdash B$;
\item $ \mathfrak{M},w \Vdash A\vee B~~\Leftrightarrow ~~\mathfrak{M},w \Vdash A ~{\rm or}~ \mathfrak{M},w \Vdash B$;
\item $ \mathfrak{M},w \Vdash A\rightarrow B \,\,\,\Leftrightarrow\,\,\,   \left\lbrace v~|~v\Vdash A ~\Rightarrow~v\Vdash B\right\rbrace =\overline{A^{{\mathfrak M}}}\cup B^{{\mathfrak M}} \in N(w)$;
\item $ \mathfrak{M},w \nVdash \perp,$
\end{enumerate}
where $ A^{{\mathfrak M}}:=\left\lbrace  w \in W~|~\mathfrak{M},w\Vdash A\right\rbrace  $. A formula $A$ is \textit{valid} in $\mathfrak{M}$, $\mathfrak{M}\,\,{\Vdash}\, A$, if for all $w \in W, \,\mathfrak{M}, w\Vdash A$, and $ A $ is valid in $ \fl $, $ \fl \,\,{\Vdash}\, A$ if for all $ \mathfrak{M} $ on $ \fl $, $ \mathfrak{M} \Vdash A $.  We write $\,{\Vdash}\, A$ if $\mathfrak{M}\,{\Vdash}\, A$ for all $\,\mathfrak{M}$. Also we define $ \Gamma\,{\Vdash}\, A $ iff for all $ \mathfrak{M} $,$ w \,{\in}\, \mathfrak{M} $, if $ \mathfrak{M} , w \Vdash \Gamma$ then $  \mathfrak{M} , w \Vdash A$.
\end{dfn}

\begin{table}[h]
\caption{The Hilbert-style system for ${\sf WF_{N_{2}}} $ }\label{fig:axioms:wf}
\begin{center}
\scalebox{0.85}{
\begin{tabular}{cc}
\hline\hline
\noalign{\smallskip}
& \\[5pt]
 1. $ A \IMP (A \OR B )$ 
& 
2. $ B \IMP (A \OR B )$
\\[5pt]

 3. $ (A \AND B) \IMP A$
&
4. $ (A \AND B) \IMP B$
\\[5pt]

5. $ A \AND(B \OR C) \IMP (A \AND B ) \OR (A \AND C) $
&
6. $A \IMP A$ 
\\[5pt]

7. $\vliinf{}{}{B}{A}{A \IMP B}$ 
&
8. $\vlinf{}{}{B \IMP A}{A} $
\\[5pt]

9. $\vliinf{}{}{A\IMP C}{A \IMP B}{B \IMP C}$
&
10. $\vliinf{}{}{A \IMP (B \AND C)}{A \IMP B }{A \IMP C}$
\\[5pt]

11.  $\vliinf{}{}{(A \OR B ) \IMP C}{A \IMP C}{B \IMP C}$
&
12.  $\vliinf{}{}{ A \AND B}{A}{B}$
\\[5pt]

13. $\vliinf{}{}{(A \IMP B) \IMP (C \IMP D) }{C\IMP A\OR D}{C\AND B\IMP D}$
&
14. $\bot \IMP A$
\\[5pt]
\noalign{\smallskip}\hline\hline
\end{tabular}}
\end{center}
\end{table}

 \begin{dfn}\label{def:hilbert:wf} 
The Hilbert-style axiomatization of the subintuitionistic logic ${\sf WF_{N_{2}}} $ consists of the axioms and inference rules reported in Table~\ref{fig:axioms:wf}. 
\end{dfn}  
In Table~\ref{fig:axioms:wf}, the rules should be applied in such a way that, if the formulas above the line are theorems of ${\sf WF_{N_{2}}}$, then the formula below the line is a theorem as well and we will refer to rules 7, 8, 12 and 13 as the modus ponens (MP), a fortiori (AF), conjunction and (${\sf  N_{2}} $) rules, respectively.
The basic notion $ \vdash_{{\sf WF_{N_{2}}} } A$ means that $ A $ can be drived from the axioms of the ${\sf WF_{N_{2}}}$  by means of its rules. But, when one axiomatizes local validity, not all rules in a Hilbert type system have the same status when one considers deductions from assumptions.
 In the case of deductions from assumptions, we impose restrictions on all rules except the conjunction rule.
The restriction on modus ponens is slightly weaker than on the other rules.
The following definition arises from these considerations.

 \begin{dfn}\label{hh}
We define $ \Gamma\,{\vdash_{{\sf WF_{N_{2}}}}} A $ iff there is a  derivation of $A$ from $ \Gamma $ using the rules 8, 9, 10 , 11 and 13 of Table~\ref{fig:axioms:wf}, only when there are  no assumptions, and the rule 5, MP, only when the derivation of $ A\rightarrow B $ contains no assumptions.
\end{dfn}  
 
By the definition of $ \Gamma\,{\vdash_{{\sf WF_{N_{2}}}}} A $, a weak form of the deduction theorem with a single assumption is obtained:

\begin{thm}\label{z3}
{\rm (Weak Deduction theorem, \cite{Di})}
\begin{enumerate}
\item[(a)] $A\vdash_{{\sf WF_{N_{2}}}} B  ~$ iff $ ~\vdash_{{\sf WF_{N_{2}}}} A\rightarrow B $.
\item[(b)] $A_1,\dots,A_n\vdash_{{\sf WF_{N_{2}}}} B  ~$ iff $~ \vdash_{{\sf WF_{N_{2}}}} A_1\wedge\dots\wedge A_n\rightarrow B $.
\end{enumerate}
\end{thm}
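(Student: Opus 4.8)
The plan is to establish (a) in full and then reduce (b) to (a) using the (unrestricted) conjunction rule 12 together with the projection axioms 3 and 4. For (a), the direction from right to left is immediate: if $\vdash_{{\sf WF_{N_{2}}}} A\to B$, then since $A\to B$ is derived without assumptions, a single application of MP with minor premise $A$ (the assumption) and major premise $A\to B$ (a theorem) satisfies the restriction in Definition~\ref{hh}, yielding $A\vdash_{{\sf WF_{N_{2}}}} B$.

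The substance lies in the left-to-right direction, which I would prove by induction on the length of a derivation of $B$ from the single assumption $A$, establishing the stronger statement that $\vdash_{{\sf WF_{N_{2}}}} A\to C$ for every formula $C$ occurring in that derivation. The a fortiori rule 8 is the engine of the argument: whenever $C$ is an axiom, or is obtained by any of the restricted rules 8, 9, 10, 11, 13 (which by Definition~\ref{hh} are applied only in the absence of assumptions, hence produce a theorem $\vdash_{{\sf WF_{N_{2}}}} C$), I obtain $\vdash_{{\sf WF_{N_{2}}}} A\to C$ by one application of a fortiori. When $C$ is the assumption $A$ itself, $\vdash_{{\sf WF_{N_{2}}}} A\to C$ is axiom 6. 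The two genuine inductive steps are: if $C$ arises by MP from $D$ and $D\to C$, then the restriction forces $\vdash_{{\sf WF_{N_{2}}}} D\to C$ as a theorem, and combining the inductive hypothesis $\vdash_{{\sf WF_{N_{2}}}} A\to D$ with the transitivity rule 9 gives $\vdash_{{\sf WF_{N_{2}}}} A\to C$; and if $C=D\wedge E$ arises by the conjunction rule 12, then from the inductive hypotheses $\vdash_{{\sf WF_{N_{2}}}} A\to D$ and $\vdash_{{\sf WF_{N_{2}}}} A\to E$ rule 10 yields $\vdash_{{\sf WF_{N_{2}}}} A\to(D\wedge E)$. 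Taking $C=B$ finishes the direction.

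For (b), I would deduce both directions from (a) by converting between the assumption set $\{A_1,\dots,A_n\}$ and the single assumption $A_1\wedge\dots\wedge A_n$. In one direction, repeated use of the conjunction rule 12, which carries no restriction, gives $A_1,\dots,A_n\vdash_{{\sf WF_{N_{2}}}} A_1\wedge\dots\wedge A_n$, so a derivation of $B$ from $A_1\wedge\dots\wedge A_n$ (available from $\vdash_{{\sf WF_{N_{2}}}} A_1\wedge\dots\wedge A_n\to B$ via part (a)) can be prefixed to obtain $A_1,\dots,A_n\vdash_{{\sf WF_{N_{2}}}} B$. Conversely, each projection $\vdash_{{\sf WF_{N_{2}}}} (A_1\wedge\dots\wedge A_n)\to A_i$ is a theorem (from axioms 3, 4 and transitivity), so MP against the single assumption $A_1\wedge\dots\wedge A_n$ recovers each $A_i$; substituting these sub-derivations for the leaves $A_i$ turns a derivation of $B$ from $A_1,\dots,A_n$ into one from $A_1\wedge\dots\wedge A_n$, and part (a) then gives $\vdash_{{\sf WF_{N_{2}}}} A_1\wedge\dots\wedge A_n\to B$.

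The main obstacle is bookkeeping rather than ingenuity: at every step I must check that the restrictions of Definition~\ref{hh} are respected, namely that the rules 8, 9, 10, 11, 13 are only ever invoked on theorems and that MP is only invoked with a theorem as major premise. In the substitutions used for (b) I must further verify that replacing a leaf $A_i$ by a sub-derivation depending on the new assumption does not alter whether any premise contains assumptions, so that the restriction status of every subsequent rule application is preserved.
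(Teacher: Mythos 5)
The paper states Theorem~\ref{z3} without proof, citing \cite{Di}, so there is no internal proof to compare against; judged on its own merits, your argument is correct and is the standard one: the right-to-left direction of (a) is immediate from the MP restriction, and the left-to-right direction is the expected induction on derivations, where the restrictions of Definition~\ref{hh} guarantee that every premise of rules 8--11 and 13 (and the major premise of MP) is an outright theorem, so a fortiori, transitivity (rule 9), and rule 10 cover all cases, with the assumption leaf handled by axiom 6. Your reduction of (b) to (a) via the unrestricted conjunction rule and the projection theorems, together with the explicit check that grafting subderivations at assumption leaves leaves the assumption-free status of every other subderivation unchanged (so the restriction status of each rule application is preserved), is likewise sound.
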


\begin{thm} (\cite{Di}, Theorem 14)
The  subintuitionistic logic ${\sf WF_{N_{2}}} $ is sound and strongly complete with respect to
the class of N-neighborhood frames that are closed under superset.
\end{thm}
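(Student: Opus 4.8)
I would prove the two halves by the two standard routes: \emph{soundness} by induction on derivations, and \emph{strong completeness} by a canonical model argument. For soundness I would fix an N-neighbourhood frame $\mathfrak{F}=\langle W,N\rangle$ closed under superset and, reasoning in an arbitrary model $\mathfrak{M}$ on $\mathfrak{F}$, check that each axiom of Table~\ref{fig:axioms:wf} is valid and that each rule preserves validity over $\mathfrak{F}$; for the rules whose use is restricted to the assumption-free case (Definition~\ref{hh}) this is precisely the global preservation that is needed. Axioms 1--6 and 14 follow at once from the truth clauses, and rules MP, AF, conjunction and 9--11 reduce to the closure properties of $N$. The only case needing genuine work is rule 13 ($N_{2}$): I would show that the truth set of the conclusion is all of $W$. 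Assuming $\mathfrak{F}\Vdash C\to(A\vee D)$ and $\mathfrak{F}\Vdash(C\wedge B)\to D$, fix a world $v$ with $v\Vdash A\to B$ and prove $v\Vdash C\to D$. Writing $R=\overline{A^{\mathfrak{M}}}\cup B^{\mathfrak{M}}$, $P=\overline{C^{\mathfrak{M}}}\cup A^{\mathfrak{M}}\cup D^{\mathfrak{M}}$ and $Q=\overline{C^{\mathfrak{M}}}\cup\overline{B^{\mathfrak{M}}}\cup D^{\mathfrak{M}}$, the hypotheses give $R,P,Q\in N(v)$, while a short computation yields $R\cap P\cap Q=R\cap(\overline{C^{\mathfrak{M}}}\cup D^{\mathfrak{M}})\subseteq\overline{C^{\mathfrak{M}}}\cup D^{\mathfrak{M}}$; the closure properties of $N(v)$ then give $\overline{C^{\mathfrak{M}}}\cup D^{\mathfrak{M}}\in N(v)$, that is $v\Vdash C\to D$. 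Since this holds for every $v$, the conclusion is valid in $\mathfrak{F}$.

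For strong completeness I would prove the contrapositive: if $\Gamma\nvdash_{{\sf WF_{N_{2}}}}A$ then $\Gamma\nVdash A$. Take as worlds the \emph{prime theories}: consistent sets $w$ of formulas ($\bot\notin w$) that contain every theorem, are closed under conjunction and under assumption-free modus ponens (if $B\in w$ and $\vdash_{{\sf WF_{N_{2}}}}B\to C$ then $C\in w$), and satisfy $B\vee C\in w\Rightarrow B\in w$ or $C\in w$. The engine of the construction is a Lindenbaum-type extension lemma: if $\Gamma\nvdash_{{\sf WF_{N_{2}}}}A$ then $\Gamma$ extends to a prime theory $w$ with $A\notin w$. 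I would prove it by the usual stage-by-stage enumeration, the primeness step resting on rules 11 and 12 and on the weak deduction theorem (Theorem~\ref{z3}), which is exactly what lets one move between derivability of an implication $\vdash_{{\sf WF_{N_{2}}}}X\to Y$ and the single-premise consequence $X\vdash_{{\sf WF_{N_{2}}}}Y$.

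On the set $W^{c}$ of all prime theories I would put the canonical valuation $V(p)=\{w:p\in w\}$ and, writing $\hat{B}=\{v\in W^{c}:B\in v\}$, the canonical neighbourhood function $X\in N(w)$ iff there are $B,C$ with $(B\to C)\in w$ and $\overline{\hat{B}}\cup\hat{C}\subseteq X$. This is closed under superset by construction, and since $A\to A$ is a theorem (axiom 6) one has $\overline{\hat{A}}\cup\hat{A}=W^{c}\in N(w)$, so the canonical frame is an N-frame closed under superset. The core is the truth lemma $w\Vdash B\iff B\in w$, by induction on $B$; the atomic, $\wedge$, $\vee$ and $\bot$ clauses are routine from primeness and consistency. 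In the implication clause, $w\Vdash B\to C$ means $\overline{\hat{B}}\cup\hat{C}\in N(w)$, i.e.\ there exist $B',C'$ with $(B'\to C')\in w$ and $\overline{\hat{B'}}\cup\hat{C'}\subseteq\overline{\hat{B}}\cup\hat{C}$. Reading this inclusion over prime theories and applying the extension lemma and Theorem~\ref{z3} gives $\vdash_{{\sf WF_{N_{2}}}}B\to(B'\vee C)$ and $\vdash_{{\sf WF_{N_{2}}}}(B\wedge C')\to C$; rule 13 then yields the theorem $\vdash_{{\sf WF_{N_{2}}}}(B'\to C')\to(B\to C)$, and one step of assumption-free modus ponens places $B\to C$ in $w$. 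The converse inclusion is immediate. With the truth lemma in hand, the extension lemma applied to $\Gamma\nvdash_{{\sf WF_{N_{2}}}}A$ produces a world $w\supseteq\Gamma$ with $A\notin w$, so $w\Vdash\Gamma$ and $w\nVdash A$, giving $\Gamma\nVdash A$.

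The main obstacle is the left-to-right direction of the implication clause of the truth lemma: one must convert a purely semantic inclusion of truth sets over the canonical worlds into two concrete derivable implications and recombine them, and this is the unique point where the rule $N_{2}$ is indispensable. Throughout this step the restrictions of Definition~\ref{hh} must be honoured---rule 13 applied only in the absence of assumptions, and modus ponens only with an assumption-free major premise---so that the implications produced are genuine theorems. A final bookkeeping point is to confirm that the closure properties of $N$ invoked in the soundness of $N_{2}$ are exactly those carried by the canonical frame, so that soundness and completeness are established for one and the same class of frames.
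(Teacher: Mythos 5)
Your strong-completeness half is essentially the proof this theorem actually has: the paper itself gives no argument (the result is imported verbatim from \cite{Di}, Theorem~14), and the proof there is exactly your canonical construction --- prime theories, the neighbourhood function generated under supersets by the sets $\overline{\hat{B}}\cup\hat{C}$ for $B\rightarrow C\in w$, and a truth lemma whose implication case converts the inclusion $\overline{\hat{B'}}\cup\hat{C'}\subseteq\overline{\hat{B}}\cup\hat{C}$, via the extension lemma and Theorem~\ref{z3}, into the two theorems $\vdash_{{\sf WF_{N_{2}}}}B\rightarrow(B'\vee C)$ and $\vdash_{{\sf WF_{N_{2}}}}(B\wedge C')\rightarrow C$, recombined by rule~13. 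That part is correct and on the standard route.

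The soundness half, however, has a genuine gap. In your $N_{2}$ case you conclude $\overline{C^{\mathfrak{M}}}\cup D^{\mathfrak{M}}\in N(v)$ from $R,P,Q\in N(v)$ and $R\cap P\cap Q\subseteq\overline{C^{\mathfrak{M}}}\cup D^{\mathfrak{M}}$; that step needs closure of $N(v)$ under finite intersections, and the frame class of the theorem is closed \emph{only} under supersets (indeed, as your own final ``bookkeeping point'' would reveal, the canonical frame carries no intersection closure). The same over-optimism infects the claim that MP and rules 9--11 ``reduce to the closure properties of $N$'': on superset-closed frames these rules do not even preserve validity in a single model. For MP, take $W=\{1,2\}$, $N(1)=N(2)=\{\{1\},W\}$, $p$ true everywhere and $q$ true only at $1$: then $p$ and $p\rightarrow q$ are valid in this model (since $\overline{p^{\mathfrak{M}}}\cup q^{\mathfrak{M}}=\{1\}\in N(w)$) but $q$ is not; an analogous two-point model refutes model-validity preservation for rule~9. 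The repair is to strengthen the induction hypothesis on assumption-free derivations: every theorem is valid, \emph{and} every theorem of the form $A\rightarrow B$ additionally satisfies $A^{\mathfrak{M}}\subseteq B^{\mathfrak{M}}$ in every model on a superset-closed frame. With these inclusions each rule case needs only one premise neighbourhood plus superset closure; e.g.\ for $N_{2}$, from $C^{\mathfrak{M}}\subseteq A^{\mathfrak{M}}\cup D^{\mathfrak{M}}$ and $C^{\mathfrak{M}}\cap B^{\mathfrak{M}}\subseteq D^{\mathfrak{M}}$ one gets $\overline{A^{\mathfrak{M}}}\cup B^{\mathfrak{M}}\subseteq\overline{C^{\mathfrak{M}}}\cup D^{\mathfrak{M}}$, so $v\Vdash A\rightarrow B$ yields $v\Vdash C\rightarrow D$ directly, and the inclusion component for the conclusion is preserved as well; the restricted MP is then sound because its major premise, being a theorem, carries the inclusion $A^{\mathfrak{M}}\subseteq B^{\mathfrak{M}}$. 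This strengthened invariant is the missing idea; as written, your soundness argument would fail exactly where you locate the interesting work.
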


\section{ The system ${\sf  GWF_{N_{2}}} $}\label{modneigh1}

\begin{table}[h]
\caption{The sequent calculus ${\sf  GWF_{N_{2}}} $}\label{wfn2}
\begin{center}
\scalebox{0.85}{
\begin{tabular}{cc}
\hline\hline
\noalign{\smallskip}
\textbf{Initial sequents:} & \\[5pt]

\AxiomC{}
\RightLabel{\(id\)}
\UnaryInfC{$p, \Gamma \Rightarrow \Delta, p$}
\DisplayProof
&
\AxiomC{}
\RightLabel{\(L_{\bot}\)}
\UnaryInfC{$\bot, \Gamma \Rightarrow \Delta$}
\DisplayProof\\[10pt]

\multicolumn{2}{l}{ \( p \) is an atomic variable.} \\[10pt]
\noalign{\smallskip}\hline\noalign{\smallskip}
\textbf{logical rules:} & \\[5pt]
\AxiomC{$X, Y, \Gamma \Rightarrow \Delta$}
\RightLabel{\(L_{\wedge}\)}
\UnaryInfC{$X \wedge Y, \Gamma \Rightarrow \Delta$}
\DisplayProof
& 
\AxiomC{$\Gamma \Rightarrow \Delta, X$}
\AxiomC{$\Gamma \Rightarrow \Delta, Y$}
\RightLabel{\(R_{\wedge}\)}
\BinaryInfC{$\Gamma \Rightarrow \Delta, X \wedge Y$}
\DisplayProof \\[10pt]

\AxiomC{$X, \Gamma \Rightarrow \Delta$}
\AxiomC{$Y, \Gamma \Rightarrow \Delta$}
\RightLabel{\(L_{\lor}\)}
\BinaryInfC{$X \lor Y, \Gamma \Rightarrow \Delta$}
\DisplayProof
&
\AxiomC{$\Gamma \Rightarrow \Delta, X, Y$}
\RightLabel{\(R_{\vee}\)}
\UnaryInfC{$\Gamma \Rightarrow \Delta, X \vee Y$}
\DisplayProof\\[10pt]

\AxiomC{$\Gamma \Rightarrow \Delta, A$}
\AxiomC{$B, \Gamma \Rightarrow \Delta$}
\RightLabel{\(L_{\supset}\)}
\BinaryInfC{$A \supset B, \Gamma \Rightarrow \Delta$}
\DisplayProof
&
\AxiomC{$A, \Gamma \Rightarrow \Delta, B$}
\RightLabel{\(R_{\supset}\)}
\UnaryInfC{$\Gamma \Rightarrow \Delta, A \supset B$}
\DisplayProof \\[10pt]

\AxiomC{$C\supset D, A \Rightarrow  B$}
\RightLabel{\(LR_{\to} \)}
\UnaryInfC{$ \Gamma , C\rightarrow D\Rightarrow \Delta, A \rightarrow B$}
\DisplayProof
&
\AxiomC{$A \Rightarrow  B$}
\RightLabel{\(R_{\to} \)}
\UnaryInfC{$\Gamma \Rightarrow \Delta, A \rightarrow B$}
\DisplayProof\\[14pt]

\multicolumn{2}{l}{ \( A \), \( B \), \( C \), \( D \) \( \in \) \( \sf Frm \) and \( X \), \( Y \) \( \in \)\( \sf Frm_2 \).  } \\[10pt]

\noalign{\smallskip}\hline\hline
\end{tabular}}
\end{center}
\end{table}

In this section, we present the sequent calculus system $ {\sf  GWF_{N_{2}}} $. To formally represent strict implication $ \rightarrow $ in the logic $ {\sf  GWF_{N_{2}}} $, inspired by \cite{sa}, we introduce a novel expression 
$ A\supset B $, which is intended to signify the material implication found in classical logic.

Let $ \mathcal{L}=\lbrace  \wedge, \vee, \rightarrow, \bot\rbrace $. We denote the set of all formulas constructed from $ \mathcal{L}  $ as ${\sf Frm}$, with uppercase Latin letters $ A, B, C, \cdots $ serving as metavariables for these formulas. Additionally, we define a new set $ { \sf Frm_1}:={\sf Frm}\cup\lbrace A\supset B ~|~A, B \in {\sf Frm}\rbrace$. Following the approach in \cite{ab}, we construct $ { \sf Frm_2}$ as the set of expressions formed by the conjunctions and disjunctions of  $ {\sf Frm_{1}} $-formulas. It is important to note that nested material implications $ \supset $ are not permitted in this system.

For simplicity and clarity, we also refer to members of ${ \sf Frm_1} $ and ${ \sf Frm_2} $ as formulas and use the symbols $ X, Y, Z, \cdots $ as metavariables for both sets. A sequent in this system takes the form 
$ \Gamma\Rightarrow\Delta $, where $ \Gamma $ and $ \Delta $ are finite multisets of formulas drawn from ${ \sf Frm_2}$.

Note that in the rules, the multisets $ \Gamma $ and $  \Delta$ are called \textit{side formulas}, the formula in
the conclusion is called \textit{principal}, and the formulas in the premises are called \textit{active}.
We call a formula $  A$ \textit{strict implicational}, if $ A=B\rightarrow C $ for some $  B, C \in  {\sf Frm}$.

\begin{dfn}
The rules of the calculus ${\sf  GWF_{N_{2}}}$ for the basic subintuitionistic logic ${\sf  WF_{N_{2}}}$ reported in Table \ref{wfn2}.
\end{dfn}
The concept of a proof tree is defined as usual. For a proof tree $ \mathcal{D} $ ending with the sequent 
$ \Gamma\Rightarrow\Delta $, the height of 
$ \mathcal{D} $ is the maximum number of successive rule applications, with initial sequents having a height of $ 0 $.
By, $ \vdash_{n} \Gamma\Rightarrow\Delta $ we indicate that the sequent 
$ \Gamma\Rightarrow\Delta $ is derivable in ${\sf  GWF_{N_{2}}}$  with a proof of height at most $ n $.

It is worth mentioning that if the rule \( (LR\rightarrow) \) is generalized to the rule  
\[
\dfrac{\Pi^{\supset}, A \Rightarrow B}{\Pi, \Gamma \Rightarrow \Delta, A \rightarrow B}
\]
in which \( \Pi \) is a multiset consisting of strict implicational formulas and \( \Pi^{\supset} \) is obtained by replacing each formula \( A \rightarrow B \in \Pi \) with \( A \supset B \), then the system \( {\sf GF} \), introduced by Aboolian and Alizadeh \cite{ab}, is obtained. Furthermore, if the condition on \( \Pi \) in this rule is modified so that \( \Pi \) becomes an arbitrary multiset of \( {\sf Frm_{1}} \) formulas, the system \( {\sf GBPC3} \) for Visser's basic propositional logic, introduced by Aghaei and Ardeshir \cite{ar}, is derived.
\subsection{Admissibility of structural rules}

In this section we will prove the admissibility of structural rules for the calculus  ${\sf  GWF_{N_{2}}}$.
\begin{lem}\label{1}
The sequent $ X, \Gamma\Rightarrow \Delta, X $ is derivable in ${\sf  GWF_{N_{2}}}$  for an arbitrary formula $ X $ and arbitrary side formulas $ \Gamma $ and $\Delta$.
\end{lem}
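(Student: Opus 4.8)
The plan is to argue by induction on the complexity (the number of connectives) of the formula $X$, proving the statement simultaneously for \emph{all} side multisets $\Gamma$ and $\Delta$. This uniformity in $\Gamma,\Delta$ is exactly what makes the induction go through: in each inductive step I will break $X$ apart with a left and a right rule, and the immediate subformulas of $X$ will end up absorbed into the side contexts, where the induction hypothesis still applies because it quantifies over arbitrary $\Gamma,\Delta$.

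For the base cases, if $X=p$ is atomic then $p,\Gamma\Rightarrow\Delta,p$ is literally an instance of $(id)$, and if $X=\bot$ then $\bot,\Gamma\Rightarrow\Delta,\bot$ is an instance of $(L_{\bot})$, reading its succedent context as $\Delta,\bot$. For the propositional (classical) cases the derivations are routine: for $X=Y\wedge Z$ I apply $(L_{\wedge})$ to reduce to $Y,Z,\Gamma\Rightarrow\Delta,Y\wedge Z$, then $(R_{\wedge})$ to reduce to $Y,Z,\Gamma\Rightarrow\Delta,Y$ and $Y,Z,\Gamma\Rightarrow\Delta,Z$, each an instance of the induction hypothesis for the smaller $Y$, resp.\ $Z$; the case $X=Y\vee Z$ is dual, using $(L_{\vee})$ above $(R_{\vee})$. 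For a material implication $X=A\supset B$ I apply $(R_{\supset})$ and then $(L_{\supset})$, reducing to $A,\Gamma\Rightarrow\Delta,B,A$ and $B,A,\Gamma\Rightarrow\Delta,B$, which are instances of the induction hypothesis for $A$ and for $B$.

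The essential case, and the one I expect to be the main obstacle, is the strict implication $X=A\rightarrow B$ with $A,B\in{\sf Frm}$, because the rule $(LR_{\to})$ behaves quite differently from the classical rules: it discards the entire side context $\Gamma,\Delta$ in passing to the premise, and it switches the strict arrow $\rightarrow$ on the left for the material arrow $\supset$. The trick is to instantiate $(LR_{\to})$ by taking the left principal formula $C\rightarrow D$ to be precisely our $A\rightarrow B$ and simultaneously producing $A\rightarrow B$ on the right, namely
\[
\dfrac{A\supset B,\;A\Rightarrow B}{A\rightarrow B,\;\Gamma\Rightarrow\Delta,\;A\rightarrow B}\;(LR_{\to})
\]
so that the whole derivation reduces to finding a proof of the premise $A\supset B,\,A\Rightarrow B$. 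This premise no longer mentions $\Gamma$ or $\Delta$, and it is proved by applying $(L_{\supset})$ to reduce it to $A\Rightarrow B,A$ and $B,A\Rightarrow B$, which are instances of the induction hypothesis for $A$ and for $B$ respectively (both strictly smaller than $A\rightarrow B$). The delicate point to get right is therefore the bridge from the strict connective to the material one: $(LR_{\to})$ converts the goal into a material-implication identity on $A\supset B$, and only $(L_{\supset})$—not $(R_{\supset})$—is needed there, after which the induction closes the argument.
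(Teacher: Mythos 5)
Your proof is correct and takes essentially the same approach as the paper, which simply defers to the analogous Lemma 2.2 of Aboolian--Alizadeh: a structural induction on $X$ with $\Gamma,\Delta$ quantified universally, handling $A\rightarrow B$ by instantiating $(LR_{\to})$ with the same formula principal on both sides and then closing the resulting premise $A\supset B, A\Rightarrow B$ via $(L_{\supset})$ and the induction hypothesis on $A$ and $B$. Your observation that the $(LR_{\to})$ premise discards the side contexts—and that the induction still closes because $A$ and $B$ are strictly smaller—is exactly the point that makes the standard argument go through here.
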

\begin{proof}
The proof is similar to the proof of Lemma 2.2 in \cite{ab}.
\end{proof}

The next property we aim to establish is the invertibility of rules. A rule is said to be height-preserving invertible \textit{(hp-invertible)} if, whenever the conclusion of a rule instance is derived, there exists a derivation of its premise(s) with a height that is less than or equal to the height of the derivation of the conclusion.
\begin{lem}\label{2}
\textbf{(Inversion Lemma)} Every rule in ${\sf  GWF_{N_{2}}}$, with the exception of \(LR\to \) and \(R\to \), is hp-invertible.
\end{lem}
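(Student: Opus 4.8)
The plan is to prove the statement by a single induction on the height $n$ of the derivation of the conclusion, treating the invertible rules uniformly. For the one-premise rules $L_\wedge$, $R_\vee$ and $R_\supset$ I aim to show that $\vdash_n \Gamma \Rightarrow \Delta$ (conclusion) implies $\vdash_n$ of the premise; for the two-premise rules $R_\wedge$, $L_\lor$ and $L_\supset$ I show that derivability of the conclusion at height $n$ yields derivability of each premise separately at height $\le n$. The point that makes this uniform treatment possible is that all six rules in question share their side context: the multisets $\Gamma$ and $\Delta$ occur unchanged in premise(s) and conclusion, so each inversion merely strips the principal formula and inserts the corresponding active formula(s) into the same context.

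First I would settle the base case $n = 0$, where the conclusion is an initial sequent ($id$ or $L_\bot$). The formula to be inverted is compound (a conjunction, a disjunction, or a $\supset$-formula), hence it is neither the matching atom $p$ of an $id$-axiom nor the $\bot$ of an $L_\bot$-axiom; it therefore occurs only among the side formulas. Deleting it and adjoining the active formulas leaves the witnessing atom $p$ on both sides (resp. leaves $\bot$ in the antecedent), so each target sequent is again an initial sequent and has height $0$.

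For the inductive step I would assume $\vdash_{n+1}$ (conclusion) with last rule $(R)$ and distinguish two cases. If the formula being inverted is not principal in $(R)$, then it lies in the side context of $(R)$; I apply the induction hypothesis to the premise(s) of $(R)$ and reapply $(R)$, obtaining the desired sequent(s) at height $\le n+1$. If the formula being inverted is principal in $(R)$, then $(R)$ is forced to be the very rule under consideration — a $\wedge$-formula can be introduced on the left only by $L_\wedge$ and on the right only by $R_\wedge$, and likewise for $\lor$ and for $\supset$ — so the premise(s) of $(R)$ are already exactly the sequent(s) I want, at height $n \le n+1$.

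The step needing the most care, and the very reason $LR_\to$ and $R_\to$ must be excluded from the statement, is the case in which the last rule $(R)$ is one of the two $\to$-rules, since these are the only rules that do \emph{not} share their side context: the multisets $\Gamma, \Delta$ of the conclusion are discarded in the premise ($C \supset D, A \Rightarrow B$ for $LR_\to$, and $A \Rightarrow B$ for $R_\to$). Here the key observations are that the principal formula of $LR_\to$ or $R_\to$ is always a strict-implicational formula, so it can never coincide with the conjunction, disjunction, or $\supset$-formula I am inverting; consequently that formula must lie in $\Gamma$ or $\Delta$ and leaves no trace in the premise. I can then simply reapply the same $\to$-rule to the \emph{unchanged} premise, but with the modified side context (principal formula removed, active formulas inserted). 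Because $LR_\to$ and $R_\to$ admit arbitrary side multisets, this reapplication is legitimate and reuses the premise at its original height, so the resulting derivation again has height $n+1$. Collecting all the cases completes the induction.
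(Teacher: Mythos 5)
Your proof is correct and follows essentially the same route as the paper, which simply defers to the standard height-preserving induction of Lemma~2.3 in Aboolian--Alizadeh~\cite{ab}: induction on derivation height, with the non-principal/principal case split for the context-sharing rules and the crucial observation that when the last rule is \(LR_\to\) or \(R_\to\) the inverted formula (being a \(\wedge\)-, \(\vee\)- or \(\supset\)-formula, never strict implicational) sits in the discarded side context, so one reapplies the same \(\to\)-rule to the unchanged premise with the modified context. Your explicit handling of that last case is exactly what makes the cited argument go through, so nothing is missing.
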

\begin{proof}
The proof is similar to the proof of Lemma 2.3 in \cite{ab}.
\end{proof}

\begin{lem}\label{3}
If $ \vdash_{n} \Rightarrow A\rightarrow B $, then $ \vdash_{n}  A\Rightarrow B $.
\end{lem}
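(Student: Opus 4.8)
The plan is to argue by a single inversion step, analysing the last rule applied in the given derivation $\mathcal{D}$ of $\Rightarrow A \rightarrow B$ of height at most $n$. The whole argument rests on two structural features of the end sequent: its antecedent is empty, and its succedent is the single formula $A \rightarrow B$, which is a strict implicational formula.

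First I would rule out the initial sequents: $id$ requires an atomic variable to occur in the antecedent and $L_{\bot}$ requires $\bot$ there, but our antecedent is empty; in particular $n \geq 1$. Next I would eliminate every rule whose conclusion necessarily carries a formula in its antecedent, namely $L_{\wedge}$, $L_{\vee}$, $L_{\supset}$, and $LR_{\rightarrow}$ (the last of these always places the principal strict implication $C \rightarrow D$ on the left). Since the antecedent of the end sequent is empty, none of these can have been the final rule.

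It then remains to consider the right rules $R_{\wedge}$, $R_{\vee}$, $R_{\supset}$, and $R_{\rightarrow}$, all of which may have an empty antecedent in their conclusion. Here I would use that the succedent is a single formula (a multiset of cardinality one): matching the conclusion against $\Rightarrow A \rightarrow B$ forces the side succedent $\Delta$ to be empty, so the principal formula must literally be $A \rightarrow B$. This is incompatible with $R_{\wedge}$ (whose principal formula is a conjunction), with $R_{\vee}$ (a disjunction), and with $R_{\supset}$ (a material implication $\supset$, which is a connective distinct from the strict $\rightarrow$). Hence the only possibility is $R_{\rightarrow}$, whose premise is exactly $A \Rightarrow B$, derived with height at most $n-1 \leq n$. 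This yields $\vdash_{n} A \Rightarrow B$, as required.

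The argument is essentially a root-first case analysis, so there is no deep obstacle; the one point that needs care is the multiset bookkeeping in the right-rule cases — one must check that $A \rightarrow B$ cannot be absorbed into the side multiset $\Delta$ while some other principal formula is introduced, which is excluded precisely because the succedent contains only a single occurrence. Note also that, in contrast to the general Inversion Lemma~\ref{2}, no induction on height is needed here: the empty antecedent together with the single strict-implicational succedent pins down the last rule uniquely.
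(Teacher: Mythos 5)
Your proof is correct: the case analysis on the last rule covers all ten rules of ${\sf GWF_{N_{2}}}$, and the empty antecedent together with the singleton succedent does pin down $R_{\rightarrow}$ as the only possibility, so a single inversion step suffices. This is essentially the same argument as the paper's, which simply defers to Lemma~2.4 of Aboolian and Alizadeh \cite{ab}, where the corresponding last-rule analysis is carried out; your write-up just makes the details explicit for the present calculus.
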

\begin{proof}
The proof is similar to the proof of Lemma 2.4 in \cite{ab}.
\end{proof}
A rule of inference is said to be height-preserving admissible \textit{(hp-admissible)} in ${\sf  GWF_{N_{2}}}$ if, whenever its premises are derivable in ${\sf  GWF_{N_{2}}}$, then also its conclusion is derivable (with at most the same derivation height) in ${\sf  GWF_{N_{2}}}$.

\begin{lem}\label{1b}
The left and right rules of weakening are hp-admissible in ${\sf  GWF_{N_{2}}}$.
$$ \frac{\Gamma\Rightarrow \Delta}{D, \Gamma\Rightarrow \Delta}L_{w}~~~~~~~~~~~~~\frac{\Gamma\Rightarrow \Delta}{\Gamma\Rightarrow \Delta, D}R_{w}$$
\end{lem}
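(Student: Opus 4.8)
The standard technique for height-preserving admissibility of weakening in a G3-style calculus is induction on the height $n$ of the derivation of the premise $\Gamma \Rightarrow \Delta$, carried out simultaneously for $L_w$ and $R_w$. I would treat the two rules together, showing that from $\vdash_n \Gamma \Rightarrow \Delta$ one obtains $\vdash_n D, \Gamma \Rightarrow \Delta$ and $\vdash_n \Gamma \Rightarrow \Delta, D$ for an arbitrary formula $D \in {\sf Frm_2}$. The base case is immediate: if $\Gamma \Rightarrow \Delta$ is an initial sequent, then so is the sequent obtained by adding $D$ to the antecedent or succedent, since the rules $id$ and $L_\bot$ allow arbitrary side formulas — adding $D$ to the context preserves the shape $p, \Gamma' \Rightarrow \Delta', p$ or $\bot, \Gamma' \Rightarrow \Delta'$, and the height remains $0$.

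For the inductive step, suppose the last rule applied in the derivation of $\Gamma \Rightarrow \Delta$ has height $n+1$. The plan is to push the weakening formula $D$ up into the premises, apply the induction hypothesis to obtain derivations of the weakened premises at no greater height, and then reapply the same rule. For all the rules in which the side-formula multisets $\Gamma, \Delta$ are carried unchanged from premise to conclusion — namely $L_\wedge$, $R_\wedge$, $L_\vee$, $R_\vee$, $L_\supset$, $R_\supset$ — this is routine: the added $D$ simply joins the context in each premise, the induction hypothesis supplies the weakened premises with height $\le n$, and the rule reinstates the principal formula to give the weakened conclusion with height $\le n+1$.

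The delicate cases are the two implication-introduction rules $LR_\to$ and $R_\to$, since their premises $C \supset D, A \Rightarrow B$ and $A \Rightarrow B$ carry \emph{no} side-formula context at all — the multisets $\Gamma$ and $\Delta$ appear only in the conclusion and are discarded in passing to the premise. This is precisely what makes the argument clean rather than obstructive: because $\Gamma$ and $\Delta$ already occur as arbitrary side formulas in the conclusion of these rules, one can simply append $D$ to $\Gamma$ (for $L_w$) or to $\Delta$ (for $R_w$) and reapply the very same rule instance to the \emph{unchanged} premise, obtaining $\Pi, D, \Gamma, C \rightarrow D' \Rightarrow \Delta, A \rightarrow B$ or the analogous succedent-weakened sequent at the same height, with no appeal to the induction hypothesis needed. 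Thus the step where one would normally expect trouble instead closes trivially.

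The main point to verify carefully, and the step I expect to require the most attention, is that the weakening formula $D$ ranges over ${\sf Frm_2}$ and hence may itself be a material-implication formula of the form $A \supset B$ or a conjunction/disjunction thereof; I would confirm that adding such a $D$ to the context never violates the side-condition that $\Gamma, \Delta$ consist of ${\sf Frm_2}$-formulas, and that in the $LR_\to$ and $R_\to$ cases the structural restriction confining $\supset$-formulas to the premises is not disturbed, since $D$ is placed in $\Gamma$ or $\Delta$ and not among the active formulas. With these observations the induction goes through and both $L_w$ and $R_w$ are hp-admissible.
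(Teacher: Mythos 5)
Your proof is correct and follows essentially the same route as the paper, which simply defers to the analogous Lemma 2.6 of Aboolian--Alizadeh: a simultaneous height-preserving induction in which context-carrying rules are handled by pushing $D$ into the premises, while $LR_\to$ and $R_\to$ absorb the weakening formula directly into the arbitrary side multisets of their conclusions with no appeal to the induction hypothesis --- the key observation, which you identify correctly. One trivial slip: in your displayed weakened conclusion for $LR_\to$ the multiset $\Pi$ does not belong to this calculus's rule (it appears only in the generalized $\sf GF$ rule mentioned in the paper), but this does not affect the argument.
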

\begin{proof}
The proof is similar to the proof of Lemma 2.6 in \cite{ab}.
\end{proof}

\begin{lem}\label{1c}
The left and right rules of contraction are hp-admissible in ${\sf  GWF_{N_{2}}}$.
$$\frac{D, D, \Gamma\Rightarrow \Delta}{D, \Gamma\Rightarrow \Delta}L_{c}~~~~~~~~\frac{ \Gamma\Rightarrow \Delta, D, D}{\Gamma\Rightarrow \Delta, D}R_{c}$$
\end{lem}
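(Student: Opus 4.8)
The plan is to prove hp-admissibility of contraction by simultaneous induction on the height $n$ of the derivations of the premises $D,D,\Gamma\Rightarrow\Delta$ and $\Gamma\Rightarrow\Delta,D,D$, establishing both $L_c$ and $R_c$ together. First I would settle the base case: when $n=0$ the premise is an initial sequent ($id$ or $L_\bot$), and contracting one of the two copies of $D$ still leaves an initial sequent of height $0$, since these axioms only require a matching atom (or $\bot$) somewhere in the antecedent and succedent, and the principal atom/$\bot$ is unaffected by deleting a duplicate. For the inductive step I would perform a case analysis on the last rule $\rho$ applied in the derivation of the premise, distinguishing whether one of the contracted copies of $D$ is principal in $\rho$ or not.

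The routine subcase is when neither contracted copy of $D$ is principal: then $\rho$ acts on side formulas, so I apply the induction hypothesis to the premise(s) of $\rho$ (each of height $< n$) to contract the duplicated $D$, and then reapply $\rho$, obtaining the contracted conclusion with height $\le n$. The more delicate subcase is when one copy of $D$ is principal in $\rho$. For the invertible rules ($L_\wedge, R_\wedge, L_\vee, R_\vee, L_\supset, R_\supset$) I would first use the Inversion Lemma (Lemma~\ref{2}) to unfold the non-principal second copy of $D$ into its active components with no increase in height, then apply the induction hypothesis to contract the now-duplicated active subformulas, and finally reapply $\rho$; this is the standard Gentzen-style treatment and relies essentially on hp-invertibility holding for exactly these rules.

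The main obstacle is the two non-invertible rules $LR_\to$ and $R_\to$, for which Lemma~\ref{2} gives no help. Here the saving grace is structural: in both rules the conclusion's context $\Gamma,\Delta$ (and, in $LR_\to$, the side formulas around $C\to D$) is entirely \emph{weakened in}, while the active sequent in the premise $A\Rightarrow B$ (respectively $C\supset D, A\Rightarrow B$) carries no copies of arbitrary side formulas. Consequently, if $D$ is the principal strict-implicational formula $C\to D$ introduced by $LR_\to$ (or the principal $A\to B$ introduced by $R_\to$), the second duplicate copy of $D$ must sit in the weakened context $\Gamma$ or $\Delta$; I would then simply omit that duplicate when reapplying the rule, since the rule admits an arbitrary context and the premise $A\Rightarrow B$ is untouched by contraction. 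In other words, contraction on a formula living in the freely-chosen context of $R_\to$ or $LR_\to$ is absorbed directly into the context-introduction, needing no induction hypothesis at all. I would verify carefully that no instance forces two copies of the \emph{same} principal $\to$-formula to be genuinely active in the premise, which the shape of these rules precludes, thereby closing the induction and completing the proof.
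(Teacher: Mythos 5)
Your proposal is correct and follows essentially the same route as the paper: induction on derivation height, with the invertible rules handled via the Inversion Lemma (Lemma~\ref{2}) as in the standard G3 treatment the paper delegates to Aboolian--Alizadeh, and the two non-invertible rules $LR_{\to}$ and $R_{\to}$ handled exactly as in the paper's explicit cases, by noting that the premise ($A\Rightarrow B$, resp.\ $C\supset D, A\Rightarrow B$) is untouched and the duplicate formula lives in the freely chosen context, so it is simply dropped when the rule is reapplied, with no appeal to the induction hypothesis. Your closing check that the rule shapes preclude two genuinely active copies of the principal $\to$-formula is precisely the case split ($D$ principal versus $D$ in the context) that the paper carries out.
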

\begin{proof}
Assume $ \vdash_{n} D, D, \Gamma\Rightarrow \Delta$ and  $ \vdash_{n} \Gamma\Rightarrow \Delta, D, D$. We need to show that $ \vdash_{n} D, \Gamma\Rightarrow \Delta$ and  $ \vdash_{n} \Gamma\Rightarrow \Delta, D$. The proof is by  induction on $ n $. The base case ($ n =0$) is straightforward. Suppose the statement is true for $ n $ and we have $ \vdash_{n+1} D, D, \Gamma\Rightarrow \Delta$ and  $ \vdash_{n+1} \Gamma\Rightarrow \Delta, D, D$. We just prove the cases when the last rule is $(R_{\rightarrow})$ or $(LR_{\rightarrow})$. For the proof of other cases you can refer to the proof of  Lemma 2.7 in \cite{ab}.

Suppose  $ \vdash_{n+1} D, D, \Gamma\Rightarrow \Delta$ and  the last rule is $(R_{\rightarrow})$. Then $ D, D, \Gamma=\Sigma $ and $ \Delta = \Delta^{'}, A\rightarrow B $. So $ \vdash_{n} A\Rightarrow B$. By $(R_{\rightarrow})$ we have $ \vdash_{n+1} D, \Gamma\Rightarrow \Delta$.

Suppose  $ \vdash_{n+1} D, D, \Gamma\Rightarrow \Delta$ and  the last rule is $(LR_{\rightarrow})$. Then $ D, D, \Gamma=\Sigma , C\rightarrow D $ and $ \Delta = \Delta^{'}, A\rightarrow B $. There are two cases as follows:

$\ast $ $ D= C\rightarrow E $ and $ \Sigma= C\rightarrow E, \Gamma $. Then $ D, D, \Gamma=C\rightarrow E, \Sigma $ and $ \Delta = \Delta^{'}, A\rightarrow B $. So  $ \vdash_{n} C\supset E, A\Rightarrow B$. By $(LR_{\rightarrow})$ we have $ \vdash_{n+1} D, \Gamma\Rightarrow \Delta$.

$\ast $  $\Gamma=\Gamma^{'}, C\rightarrow E $ and $ \Sigma=D, D, \Gamma^{'} $. So  $ \vdash_{n} C\supset E, A\Rightarrow B$. By $(LR_{\rightarrow})$ we have $ \vdash_{n+1} D, \Gamma\Rightarrow \Delta$.

Suppose  $ \vdash_{n+1} \Gamma\Rightarrow D, D, \Delta$ and  the last rule is $(LR_{\rightarrow}) $. Then $ \Gamma=\Pi, C\rightarrow E$ and $ \Delta, D, D = \Delta^{'}, A\rightarrow B $. There are two cases as follows:

$\ast $ $ D=A\rightarrow B $ and $ \Delta, D=\Delta^{'} $. So $\vdash_{n}  \Pi^{\supset}, A\Rightarrow B$. By $(LR_{\rightarrow}) $ we have $ \vdash_{n+1} \Pi, \Sigma \Rightarrow A\rightarrow B, \Delta$.

$\ast $ $ \Delta=A\rightarrow B, \Delta^{''} $ and $ \Delta^{''}, D, D=\Delta^{'} $. So $\vdash_{n}  \Pi^{\supset}, A\Rightarrow B$. By $(LR_{\rightarrow}) $ we have $ \vdash_{n+1} \Pi, \Sigma \Rightarrow A\rightarrow B, \Delta^{''}$.

\end{proof}

In the following we define weight of formulas and cut-height of derivations.

\begin{dfn}
The weight of a formula $  A$ in  ${\sf Frm_{2}}$ is defined inductively as follows:
\begin{enumerate}
\item $ w(\bot)=w(p)=0 $~~~~~~~~~~~~~~~~~~~~~~~~~~for atoms $ p $
\item $ w(A\rightarrow B)=w(A)+w(B)+2 $~~~~~~~~$ A\rightarrow B \in {\sf Frm} $
\item $ w(A\supset B)=w(A)+w(B)+1 $~~~~~~~ $ A\supset B \in {\sf Frm_{1}} $
\item $ w(A\circ B)=w(A)+w(B)+1 $~~~~~~~~~$ \circ \in \lbrace \wedge, \vee\rbrace $
\end{enumerate}
\end{dfn}
\begin{dfn}
 The \textit{cut-height}  of an instance of the rule of cut in a derivation is the sum of heights of derivation of the two premises of cut. 
\end{dfn}
\begin{thm}\label{1d}
The rule of cut  is addmissible in ${\sf  GWF_{N_{2}}}$.
$$ \frac{\Gamma\Rightarrow  D, \Delta~~~~~~D, \Gamma^{'}\Rightarrow \Delta^{'}}{\Gamma,\Gamma^{'} \Rightarrow \Delta, \Delta^{'}}Cut$$
\end{thm}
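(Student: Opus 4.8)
The plan is to prove cut admissibility by a double induction following the standard Gentzen strategy adapted to the G3-style setting: the primary induction is on the weight $w(D)$ of the cut formula, and the secondary induction is on the cut-height (the sum of the heights of the derivations of the two premises). First I would set up the case analysis according to the status of the cut formula $D$ in the two premises. The routine cases are those where $D$ is not principal in at least one premise: there I push the cut upward past the last rule applied in that premise, reducing the cut-height while keeping the cut formula fixed, and then close by the secondary induction hypothesis. For the initial sequents $(id)$ and $(L_\bot)$, the cut either disappears or collapses to an axiom directly.

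The heart of the argument consists of the cases where $D$ is principal in both premises simultaneously. For the propositional connectives $\wedge$, $\vee$, and the material-implication-style $\supset$, these reductions are classical: I would replace the single cut on $D$ by one or two cuts on the immediate subformulas of $D$, each of which has strictly smaller weight by the weight definition, so these are justified by the primary induction hypothesis. The genuinely novel case, and the one I expect to be the main obstacle, is the strict implication $\rightarrow$, since it is introduced only through the rules $(R_\to)$ and $(LR_\to)$, whose premises live in the richer syntax involving $\supset$ rather than $\rightarrow$. When $D = A \rightarrow B$ is principal on the right via $(R_\to)$ or $(LR_\to)$ and simultaneously principal on the left via $(LR_\to)$, the two premises are of the form $A \Rightarrow B$ (or $C \supset E, A \Rightarrow B$) and $C \supset D', A' \Rightarrow B'$ where the cut formula has been transmuted into its $\supset$-counterpart. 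The reduction must therefore relate a cut on the strict implication $A \rightarrow B$ to a cut on the material implication $A \supset B$, exploiting exactly the design of these rules; here the weight assignment $w(A \rightarrow B) = w(A) + w(B) + 2$ versus $w(A \supset B) = w(A) + w(B) + 1$ is calibrated so that after unfolding, the residual cut is on a formula of strictly smaller weight, keeping the primary induction valid.

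In carrying this out I would rely on the lemmas already established: Lemma~\ref{3} (if $\vdash_n\ \Rightarrow A \rightarrow B$ then $\vdash_n A \Rightarrow B$) is what lets me extract the premise-level sequent $A \Rightarrow B$ from a right-introduced strict implication and feed it into the reduced cut; the hp-invertibility of the non-$\rightarrow$ rules (Lemma~\ref{2}) lets me recover premises of the side rules when the cut formula is not principal; and hp-admissibility of weakening (Lemma~\ref{1b}) and contraction (Lemma~\ref{1c}) are needed to reconcile the side-formula contexts $\Gamma, \Gamma', \Delta, \Delta'$ after the subformula cuts, in particular to contract duplicated copies of contexts produced when a single premise is cut against both branches of a two-premise rule. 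The structural constraint that nested $\supset$ are not permitted and that $\Gamma, \Delta$ are multisets over ${\sf Frm_2}$ must be checked to remain respected throughout each reduction, so that every sequent produced stays within the calculus.

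I would organize the write-up by first dispatching the base cases and the non-principal cases uniformly, then treating the principal cases connective by connective, deferring the $\rightarrow$-principal-on-both-sides case to last as the crucial step. The main subtlety to verify carefully is that in the strict-implication reduction the appeal to the primary induction hypothesis is legitimate, i.e.\ that the formula on which the new cut is performed genuinely has smaller weight and that its derivations are available at the required heights; once that is confirmed, the remaining bookkeeping with weakening and contraction is routine.
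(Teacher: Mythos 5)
Your proposal follows essentially the same route as the paper's proof: a primary induction on the weight of the cut formula with a sub-induction on cut-height, routine permutation of non-principal cuts, standard reductions for $\wedge,\vee,\supset$, and, in the crucial case where $D = E \rightarrow F$ is principal in both premises via $(R_\to)$/$(LR_\to)$, the replacement of the cut on $E \rightarrow F$ by a cut on $E \supset F$ (obtained by applying $(R_\supset)$ to the left premise's premise), legitimized exactly by the calibration $w(E \supset F) = w(E)+w(F)+1 < w(E \rightarrow F)$, with a final $(LR_\to)$ restoring the contexts. The only inessential deviation is your appeal to Lemma~\ref{3}, which is not actually needed in this case since $A \Rightarrow B$ (or $\Pi^{\supset}, A \Rightarrow B$) is directly available as the premise of the last rule when the cut formula is principal.
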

\begin{proof}
The proof is by induction on the weight of the cut formula $ D $ with a sub-induction on the cut-height. Our strategy is to successively remove topmost cuts.
The proof follows exactly the same steps as the proof of Theorem 2.10 in \cite{ab}. The only difference arises in the case where cut formula $  D$ is principal in both premises and $  D=E\rightarrow F$, which is addressed in detail below. We transform

$$ \frac{\frac{\Pi^{\supset}, E\Rightarrow F}{\Pi, \Sigma\Rightarrow E\rightarrow F, \Delta}{LR_{\rightarrow}}~~~~~\frac{E\supset F, A\Rightarrow B}{E\rightarrow F, \Sigma^{'}\Rightarrow \Delta^{'}, A\rightarrow B}LR_{\rightarrow}}{\Pi, \Sigma, \Sigma^{'}\Rightarrow \Delta, \Delta^{'}, A\rightarrow B}Cut $$
\begin{center}
into
\end{center}
$$ \frac{\frac{\frac{\Pi^{\supset}, E\Rightarrow F}{\Pi^{\supset}\Rightarrow E\supset F} R_{\supset}~~~~~\genfrac{}{}{0pt}{}{}{E\supset F, A\Rightarrow B}}{\Pi^{\supset}, A\Rightarrow B}Cut}{\Pi, \Sigma, \Sigma^{'}\Rightarrow \Delta, \Delta^{'}, A\rightarrow B} LR_{\rightarrow}$$
\end{proof}

We now proceed to define the concept of an extended subformula for formulas in ${\sf  Frm_{2}} $.

\begin{dfn}
Let $ sub(A) $ denote the set of all subformulas of $ A $ for $ A \in {\sf Frm }$. The set $ sub^{\ast}(A) $ for $ X \in {\sf Frm_{2}} $ is called the set of extended subformulas of $ X $ and  defined as follows:
\begin{enumerate}
\item $ sub^{\ast}(p)=\lbrace p\rbrace $~~~~~for atoms $ p $,
\item $ sub^{\ast}(\perp)=\lbrace \perp\rbrace $,
\item $ sub^{\ast}(X \circ Y)=\lbrace X \circ Y\rbrace \cup sub^{\ast}(X ) \cup sub^{\ast}(Y)$~~~~for $ \circ \in \lbrace\wedge, \vee, \supset\rbrace $,
\item $ sub^{\ast}(A\rightarrow B)=\lbrace A\rightarrow B\rbrace \cup sub^{\ast}(A\supset B )$.
\end{enumerate}
\end{dfn}

\begin{cor}
The system  ${\sf  GWF_{N_{2}}}$ has the extended subformula property. That is, every formula occurring in a proof tree appears in $sub^{\ast}(Y)$ for some $ Y $ present in the root of the proof tree.
\end{cor}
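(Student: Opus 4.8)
The plan is to prove the extended subformula property by induction on the height of a proof tree $\mathcal{D}$ ending in $\Gamma \Rightarrow \Delta$, showing that every formula appearing anywhere in $\mathcal{D}$ lies in $\bigcup_{Y \in \Gamma \cup \Delta} sub^{\ast}(Y)$. The crucial observation enabling such an induction is that the calculus ${\sf GWF_{N_2}}$ enjoys the \emph{local subformula property}: for each rule, every active formula in a premise is an extended subformula of some formula in the conclusion. Thus the strategy is first to verify this local property rule by rule, and then to lift it to the global statement by a routine induction.

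First I would treat the base case: initial sequents $p,\Gamma \Rightarrow \Delta, p$ and $\bot, \Gamma \Rightarrow \Delta$ contain only formulas that trivially belong to $sub^{\ast}$ of themselves, since $sub^{\ast}(p)=\{p\}$ and $sub^{\ast}(\bot)=\{\bot\}$. For the inductive step, I would inspect each logical rule and check that every active formula in the premise(s) reappears as an extended subformula of the principal formula in the conclusion. For the propositional rules this is immediate from clause~3 of the definition: for instance, in $L_{\wedge}$ the active formulas $X,Y$ lie in $sub^{\ast}(X \wedge Y)$, and in $L_{\supset}$ the active $A,B$ lie in $sub^{\ast}(A \supset B)$. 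By the induction hypothesis, the sub-derivations of the premises only contain formulas drawn from the $sub^{\ast}$ of their own end-sequents; combined with the local check, these are all captured by $sub^{\ast}(Y)$ for some $Y$ in the conclusion, and the side formulas $\Gamma, \Delta$ persist unchanged.

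The main obstacle, and the reason clause~4 of the definition is designed as it is, lies in the two implication-introduction rules $R_{\to}$ and $LR_{\to}$, where a genuine formula $A \rightarrow B \in {\sf Frm}$ is introduced on the right while the premise works with the material-implication surrogate $A \supset B \in {\sf Frm_1}$. Here the active formulas of the premise of $LR_{\to}$, namely $C \supset D$, $A$, and $B$, must be shown to be extended subformulas of the conclusion's principal formulas $C \rightarrow D$ and $A \rightarrow B$. This is exactly what clause~4 guarantees: $sub^{\ast}(A \rightarrow B) = \{A \rightarrow B\} \cup sub^{\ast}(A \supset B)$, so $A \supset B$, and hence (by clause~3) $A$ and $B$, all lie in $sub^{\ast}(A \rightarrow B)$; similarly $C \supset D \in sub^{\ast}(C \rightarrow D)$. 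The same reasoning handles $R_{\to}$, whose premise $A \Rightarrow B$ contains only $A, B \in sub^{\ast}(A \rightarrow B)$.

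Once the local property is secured for all rules, the global statement follows directly: any formula occurring in $\mathcal{D}$ either occurs in the end-sequent $\Gamma \Rightarrow \Delta$ (and is then in its own $sub^{\ast}$), or occurs strictly above the last inference in a sub-derivation $\mathcal{D}'$. By the induction hypothesis applied to $\mathcal{D}'$ it lies in $sub^{\ast}(Z)$ for some $Z$ in the end-sequent of $\mathcal{D}'$, and by the local check every such $Z$ is an extended subformula of some $Y$ in $\Gamma \cup \Delta$; since $sub^{\ast}$ is transitive under the subformula ordering (each clause of the definition closes $sub^{\ast}$ under taking $sub^{\ast}$ of its components), we conclude the desired formula lies in $sub^{\ast}(Y)$. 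I expect the write-up to be short, with the only subtlety being the bookkeeping in clause~4 that bridges the $\rightarrow$/$\supset$ distinction.
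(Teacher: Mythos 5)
Your proof is correct and matches the intended argument: the paper states this as an immediate corollary (with no written proof) precisely because the cut-free rules of ${\sf GWF_{N_{2}}}$ satisfy the local property you verify, with clause~4 of the definition of $sub^{\ast}$ bridging the $\rightarrow$/$\supset$ gap in $(LR_{\rightarrow})$ and $(R_{\rightarrow})$ exactly as you describe. Your induction on proof height, using persistence of side formulas and transitivity of $sub^{\ast}$, is the standard and expected route.
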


\begin{lem}
Let $ \vdash\Gamma\Rightarrow\Delta $. If $ \Delta $ is a multisubset of $ {\sf Frm }$, then the rule $(R_{\supset} )$ is not used in the derivation of $ \Gamma\Rightarrow\Delta  $.
\end{lem}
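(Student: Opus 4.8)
The plan is to prove a statement slightly stronger than required, by induction on the height of the derivation: \emph{every} succedent occurring in a derivation whose endsequent has a $\supset$-free succedent is itself $\supset$-free. Here I call a multiset $\supset$-free if all of its members lie in ${\sf Frm}$ (equivalently, contain no occurrence of $\supset$). Since the conclusion of $(R_{\supset})$ always carries a formula $A\supset B\notin{\sf Frm}$ in its succedent, no $\supset$-free succedent can ever be produced by $(R_{\supset})$; hence this invariant immediately forces $(R_{\supset})$ never to occur when the root succedent is a multisubset of ${\sf Frm}$.

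Concretely, I would show: for every $n$, if $\vdash_{n}\Gamma\Rightarrow\Delta$ and $\Delta$ is a multisubset of ${\sf Frm}$, then every sequent in the derivation has a $\supset$-free succedent, and in particular $(R_{\supset})$ is not used. The base case $n=0$ is immediate, the derivation being a single initial sequent with no rule applications. For the inductive step I inspect the last rule $\rho$. If $\rho=(R_{\supset})$, its conclusion is $\Gamma\Rightarrow\Delta',A\supset B$, whose succedent contains $A\supset B\notin{\sf Frm}$, contradicting the hypothesis on $\Delta$; hence $\rho\neq(R_{\supset})$. For each of the remaining rules I check that a $\supset$-free conclusion-succedent forces every premise-succedent to be $\supset$-free as well, so that the induction hypothesis applies to the immediate subderivations (of height $\le n$) and yields the claim for the whole derivation.

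The heart of the argument is this routine case analysis, which relies on the side conditions of Table~\ref{wfn2}. For $(L_{\wedge})$, $(L_{\vee})$ and $(L_{\supset})$ the succedent is essentially unchanged upward; the one point to notice is that in $(L_{\supset})$ the active formula $A$ added to the premise succedent $\Delta,A$ satisfies $A\in{\sf Frm}$, because the principal formula $A\supset B$ lies in ${\sf Frm_1}$ and so $A,B\in{\sf Frm}$. For $(R_{\wedge})$ and $(R_{\vee})$ I use that if $X\wedge Y$ (resp.\ $X\vee Y$) is $\supset$-free then so are its immediate constituents $X,Y$, since ${\sf Frm}$ is closed under $\wedge$ and $\vee$; thus decomposing the principal formula upward keeps the premise succedents inside ${\sf Frm}$. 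Finally, the two rules introducing $\rightarrow$ on the right, $(R_{\rightarrow})$ and $(LR_{\rightarrow})$, discard the succedent context entirely and leave only $B\in{\sf Frm}$ in the premise succedent, which is trivially $\supset$-free.

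The only genuinely delicate point, and the step I expect to need the most care, is the $(R_{\wedge})/(R_{\vee})$ case, where one must reason about the internal structure of ${\sf Frm_2}$-formulas: being $\supset$-free is not a property of the outermost connective alone but must be inherited by all ${\sf Frm_1}$-constituents, and one should verify that a $\supset$-free member of ${\sf Frm_2}$ is in fact a member of ${\sf Frm}$. It is worth stressing that the extended subformula property by itself does \emph{not} suffice here, since $\supset$-formulas do occur in derivations (for instance $C\supset D\in sub^{\ast}(C\rightarrow D)$, and such formulas are created in antecedents by $(L_{\supset})$ and in the premise of $(LR_{\rightarrow})$); what matters is their \emph{polarity}, and this is exactly what the succedent-sensitive induction above tracks.
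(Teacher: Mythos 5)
Your proof is correct, and it is essentially the argument the paper intends: the paper simply defers to Lemma~2.14 of Aboolian--Alizadeh~\cite{ab}, which proceeds by the same induction on derivation height, tracking that every succedent stays $\supset$-free (using that $A,B\in{\sf Frm}$ in the side conditions of $(L_{\supset})$, $(R_{\to})$ and $(LR_{\to})$, and that a $\supset$-free ${\sf Frm_2}$-formula lies in ${\sf Frm}$ by closure of ${\sf Frm}$ under $\wedge,\vee$). Your observation that the extended subformula property alone would not suffice, because $\supset$-formulas legitimately occur in antecedents, correctly identifies why the succedent-polarity invariant is the right one.
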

\begin{proof}
The proof follows exactly the same steps as the proof of Lemma 2.14 in \cite{ab}.
\end{proof}

\subsection{The relation between ${\sf  GWF_{N_{2}}}$ and 
${\sf  WF_{N_{2}}}$}
In this section, we will demonstrate that, with respect to formulas in ${\sf Frm } $, 
${\sf  GWF_{N_{2}}}$ and 
${\sf  WF_{N_{2}}}$ possess the same expressive power.

\begin{thm}\label{3a}
Let $ \Gamma $ and $  \Delta$ be finite subsets of $ {\sf Frm }$. If $ \vdash_{{\sf  WF_{N_{2}}}} \bigwedge \Gamma \rightarrow \bigvee \Delta  $ then $ {\sf  GWF_{N_{2}}}  \vdash \Gamma \Rightarrow \Delta  $.
\end{thm}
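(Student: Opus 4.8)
The plan is to prove the soundness direction of the equivalence between the Hilbert system and the sequent calculus by induction on the length of the Hilbert-style derivation of $\bigwedge\Gamma\to\bigvee\Delta$ in ${\sf WF_{N_2}}$. Using the Weak Deduction Theorem (Theorem~\ref{z3}), the hypothesis $\vdash_{{\sf WF_{N_2}}}\bigwedge\Gamma\to\bigvee\Delta$ is equivalent to $\Gamma\vdash_{{\sf WF_{N_2}}}\bigvee\Delta$, so it suffices to show that every axiom and rule of Table~\ref{fig:axioms:wf} is reflected in ${\sf GWF_{N_2}}$. Concretely, I would show that for each theorem $A$ of ${\sf WF_{N_2}}$ the empty-antecedent sequent $\Rightarrow A$ is derivable in ${\sf GWF_{N_2}}$, and then assemble these using the admissible structural rules established earlier.

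First I would handle the axioms. Axioms 1--6 and 14 are propositional and follow directly from Lemma~\ref{1} (the generalized identity $X,\Gamma\Rightarrow\Delta,X$) together with the invertible propositional rules $L_\wedge, R_\wedge, L_\vee, R_\vee, L_\supset, R_\supset$ and the initial sequent $L_\bot$; for instance axiom~5 is obtained by applying $L_\wedge$, $L_\vee$, $R_\vee$ and $R_\wedge$ to instances of Lemma~\ref{1}. Next I would treat the rules, which is where the subtlety lies. For the implicational rules 7--13, the key tool is the rule $(LR_\to)$ together with Lemma~\ref{3} ($\vdash_n\ \Rightarrow A\to B$ implies $\vdash_n A\Rightarrow B$). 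For example, to simulate modus ponens (rule~7) when there are no open assumptions, from derivations of $\Rightarrow A$ and $\Rightarrow A\to B$ I would use Lemma~\ref{3} to pass to $A\Rightarrow B$, then cut (Theorem~\ref{1d}) against $\Rightarrow A$ to obtain $\Rightarrow B$. For rule~8 (a fortiori), $\vdash\ \Rightarrow A$ yields $\Rightarrow B\to A$ by first weakening to $A,\Gamma\Rightarrow\Delta,A$-style sequents and applying $(R_\to)$. Rules 9--11 compose strict implications: to derive $\Rightarrow A\to C$ from $\Rightarrow A\to B$ and $\Rightarrow B\to C$, apply Lemma~\ref{3} to get $A\Rightarrow B$ and $B\Rightarrow C$, cut on $B$ to get $A\Rightarrow C$, and re-apply $(R_\to)$.

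The main obstacle will be rule~13 (the ${\sf N_2}$ rule), whose conclusion has a strict-implicational formula both in antecedent and succedent and which is precisely the rule the calculus was designed to internalize via $(LR_\to)$ using the auxiliary material implication $\supset$. Here I would argue that, given derivations of $\Rightarrow C\to(A\vee D)$ and $\Rightarrow (C\wedge B)\to D$, one obtains via Lemma~\ref{3} the sequents $C\Rightarrow A\vee D$ and $C\wedge B\Rightarrow D$, and must produce the single-premise sequent $A\supset B, C\Rightarrow D$ needed to fire $(LR_\to)$ for the conclusion $(A\to B)\to(C\to D)$; the combinatorial work is to derive $A\supset B, C\Rightarrow D$ from the two sequents above using $L_\supset$, $L_\wedge$, $L_\vee$, cut, and Lemma~\ref{1}, matching the internal bookkeeping of $\supset$ against $\to$. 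Throughout, I would carefully track that every cut introduced is legitimate by cut-admissibility (Theorem~\ref{1d}), and that the restrictions in Definition~\ref{hh}—applying the implicational rules only with empty assumption sets—are exactly what make the Hilbert derivation translate into closed-antecedent sequents $\Rightarrow A$, so that the inductive invariant ``$\vdash_{{\sf WF_{N_2}}}A$ implies ${\sf GWF_{N_2}}\vdash\ \Rightarrow A$'' is preserved; the final step then reassembles $\Gamma\Rightarrow\Delta$ from $\Rightarrow\bigwedge\Gamma\to\bigvee\Delta$ by applying Lemma~\ref{3}, $L_\wedge$, $R_\vee$ and contraction (Lemma~\ref{1c}).
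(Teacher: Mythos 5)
Your proposal is correct and takes essentially the same route as the paper's own proof: induction on the Hilbert-style derivation, with the axioms handled via Lemma~\ref{1} and the logical rules, the remaining rules simulated using Lemma~\ref{2}, Lemma~\ref{3} and admissible cut, and the crucial ${\sf N_2}$ case resolved exactly as in the paper by deriving the intermediate sequent $A\supset B, C\Rightarrow D$ with $L_{\supset}$ and then applying $(LR_{\rightarrow})$ followed by $(R_{\rightarrow})$. Your detour through the weak deduction theorem and the explicit final decomposition of $\Rightarrow \bigwedge\Gamma\rightarrow\bigvee\Delta$ into $\Gamma\Rightarrow\Delta$ are harmless reformulations of what the paper leaves implicit.
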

\begin{proof}
The proof proceeds by induction on the height of the axiomatic derivation in ${\sf  WF_{N_{2}}}$. In order to prove the base case, we just need to show that all axioms  of  ${\sf  WF_{N_{2}}}$ can be deduced
in ${\sf  GWF_{N_{2}}}$. A straightforward application of the rules of the sequent calculus ${\sf  GWF_{N_{2}}}$, possibly using Lemma \ref{1}. The proof of all axioms are easy.

For the inductive steps, we need to show that the rules of ${\sf  WF_{N_{2}}}$, considering the Definition~\ref{hh}, can be deduced
in ${\sf  GWF_{N_{2}}}$. As an example, if the last step is by the rule $ {\sf N_{2}}$, then $ \Gamma =\emptyset $ and $ \Delta $ is $ (A\rightarrow B)\rightarrow (C\rightarrow D) $. We know that we have derived $ (A\rightarrow B)\rightarrow (C\rightarrow D) $ from  $ C\rightarrow A\vee D $, and $ C\wedge B\rightarrow D $. Thus we assume, by induction hypothesis, that 
$ {\sf  GWF_{N_{2} }  } \vdash \Rightarrow  C\rightarrow A\vee D $  and $ {\sf  GWF_{N }  }  \vdash \Rightarrow  C\wedge B\rightarrow D $. By Lemmas \ref{2} and \ref{3}, we obtain that  
$ {\sf  GWF_{N_{2} }  } \vdash  C\Rightarrow  A, D $ and $ {\sf  GWF_{N_{2} }  }  \vdash   C, B\Rightarrow D $. We can thus proceed as follows:
$$ \frac{\frac{\frac{\frac{}{C\Rightarrow A, D}\!I\!H+\ref{2}+\ref{3}~~~~~~\frac{}{C, B\Rightarrow D}\!I\!H+\ref{2}+\ref{3}}{A\supset B, C\Rightarrow D}L_{\supset}}{A\rightarrow B\Rightarrow C\rightarrow D}LR_{\rightarrow}}{\Rightarrow (A\rightarrow B)\rightarrow (C\rightarrow D)}LR_{\rightarrow} $$
The proof of other cases are easy.
\end{proof}

\begin{thm}\label{3a}
Let $ \Gamma $ and $  \Delta$ be finite subsets of $ {\sf Frm} $. If  $ {\sf  GWF_{N_{2}}}  \vdash \Gamma \Rightarrow \Delta  $ then $ \vdash_{{\sf  WF_{N_{2}}}} \bigwedge \Gamma \rightarrow \bigvee \Delta  $.
\end{thm}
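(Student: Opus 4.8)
The plan is to prove the converse direction---soundness of the sequent calculus with respect to the Hilbert system---by induction on the height of the derivation in ${\sf GWF_{N_{2}}}$. The statement to establish is that whenever $\Gamma \Rightarrow \Delta$ is derivable and both $\Gamma, \Delta$ consist of ${\sf Frm}$-formulas, we have $\vdash_{{\sf WF_{N_{2}}}} \bigwedge \Gamma \rightarrow \bigvee \Delta$. First I would set up the induction so that the induction hypothesis applies to the premises of the last rule used. A subtlety is that premises of a ${\sf GWF_{N_{2}}}$-derivation may contain ${\sf Frm_2}$-formulas (in particular the material implication $\supset$, which does not live in ${\sf Frm}$), even when the conclusion is over ${\sf Frm}$. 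Hence I would strengthen the induction hypothesis to interpret an arbitrary sequent $\Gamma \Rightarrow \Delta$ over ${\sf Frm_2}$ by a suitable ${\sf WF_{N_{2}}}$-formula: each $A \supset B$ occurring on the left or right is read through its intended meaning, so that the translation of a derivable sequent becomes a ${\sf WF_{N_{2}}}$-theorem. Concretely, I would fix a translation $(\cdot)^{\flat}$ sending $A \supset B$ to a ${\sf Frm}$-formula witnessing material implication inside ${\sf WF_{N_{2}}}$ and prove the stronger claim about $\bigwedge \Gamma^{\flat} \rightarrow \bigvee \Delta^{\flat}$.

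The base cases are the initial sequents. For $id$, the sequent $p, \Gamma \Rightarrow \Delta, p$ translates to a ${\sf WF_{N_{2}}}$-provable implication using axiom~6 ($A \rightarrow A$) together with the conjunction/disjunction projection axioms~1--4 and the transitivity rule~9; for $L_\bot$ one uses axiom~14 ($\bot \rightarrow A$). For the inductive step I would go through each logical rule and exhibit the corresponding ${\sf WF_{N_{2}}}$-derivation that takes the translated premises to the translated conclusion. The rules $L_\wedge, R_\wedge, L_\vee, R_\vee$ are handled by the distribution axiom~5, the projection axioms, rule~10 (for $R_\wedge$) and rule~11 (for $L_\vee$), all of which are among the rules usable with no assumptions in the sense of Definition~\ref{hh}. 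The rules $L_\supset$ and $R_\supset$ manipulate the material implication and should match the definitional behaviour of $(\cdot)^{\flat}$ essentially by propositional reasoning available in ${\sf WF_{N_{2}}}$.

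The main obstacle, as usual in this setting, will be the strict-implication rules $(R_\to)$ and $(LR_\to)$, since these are exactly the rules that introduce the intensional connective $\rightarrow$ and are not hp-invertible. For $(R_\to)$, whose premise is $A \Rightarrow B$ and conclusion $\Gamma \Rightarrow \Delta, A \rightarrow B$, I would invoke the weak deduction theorem (Theorem~\ref{z3}): the induction hypothesis gives $A \vdash_{{\sf WF_{N_{2}}}} B$, hence $\vdash_{{\sf WF_{N_{2}}}} A \rightarrow B$, and then the \emph{a fortiori} rule~8 yields $\vdash (\bigwedge \Gamma) \rightarrow (A \rightarrow B)$, from which the disjunction on the right follows by axiom~2 and transitivity. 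The rule $(LR_\to)$ is the critical case and the place where the logic ${\sf WF_{N_{2}}}$ is genuinely needed: its premise $C \supset D, A \Rightarrow B$ translates, via $(\cdot)^{\flat}$, to a provable statement about $C \supset D$ and $A$ entailing $B$, which must be turned into $(C \rightarrow D) \rightarrow (A \rightarrow B)$. I expect this to be precisely where the ${\sf N_2}$-rule~13 is applied---rearranging the translated premise into the two hypotheses $C \rightarrow A \vee D$ and $C \wedge B \rightarrow D$ required by rule~13, so that its conclusion $(C \rightarrow D) \rightarrow (A \rightarrow B)$ is derived, after which axioms~1--4 and rule~9 embed this into the side formulas. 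Verifying that the translation of the premise really does supply exactly the two antecedents of rule~13 is the delicate bookkeeping step, and getting the translation $(\cdot)^{\flat}$ to cooperate with all the propositional rules simultaneously is the part of the argument that needs the most care.
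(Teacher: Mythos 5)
Your high-level plan coincides with the paper's: induction on the height of the ${\sf GWF_{N_{2}}}$-derivation, with $(R_{\rightarrow})$ discharged via the (weak) deduction theorem and the a fortiori rule, and $(LR_{\rightarrow})$ discharged by feeding rule~13 (${\sf N_{2}}$) exactly the two antecedents you identified. However, the concrete device you commit to for handling $\supset$ --- a per-formula translation $(\cdot)^{\flat}$ sending $A \supset B$ to a single ${\sf Frm}$-formula ``witnessing material implication inside ${\sf WF_{N_{2}}}$'' --- is a genuine gap: no such formula exists. The language $\{\wedge,\vee,\rightarrow,\bot\}$ has only the strict (neighborhood-interpreted) implication; $(A\rightarrow\bot)\vee B$ is not the local material conditional, and the logic proves no excluded-middle-like theorems (not even $p \vee (p\rightarrow\bot)$). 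To see the failure concretely, run your induction through the critical case: from the translated premise $\vdash_{{\sf WF_{N_{2}}}} (C\supset D)^{\flat}\wedge A \rightarrow B$ you must extract \emph{both} hypotheses of rule~13, namely $\vdash A \rightarrow C\vee B$ and $\vdash D\wedge A \rightarrow B$. The first would require something like $\vdash A \rightarrow C \vee (C\supset D)^{\flat}$, i.e.\ a case-split theorem ``$C$ or ($C\supset D$)'' inside ${\sf WF_{N_{2}}}$, which is unavailable for any choice of $\flat$. The underlying obstruction is structural: a derivable sequent $C\supset D, A \Rightarrow B$ carries the content of a \emph{pair} of strict implications, $(A\rightarrow C\vee B)\wedge(D\wedge A\rightarrow B)$, and a conjunction of two strict implications is not of the shape $\bigwedge\Gamma^{\flat}\rightarrow\bigvee\Delta^{\flat}$ for any per-formula translation, so no $\flat$ can cooperate with all the rules simultaneously.

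The paper's proof avoids strengthening the induction hypothesis at all, and the repair is available in your own toolkit. Since the endsequent lies in ${\sf Frm}$, a $\supset$-formula enters the derivation only through the premise $C\supset D, A \Rightarrow B$ of $(LR_{\rightarrow})$, and the paper notes that $(L_{\supset})$ and $(R_{\supset})$ can never be the last rule of a ${\sf Frm}$-sequent. One then uses the hp-invertibility of $(L_{\supset})$ (Lemma~\ref{2}) to replace that premise, at no greater height, by the two pure ${\sf Frm}$-sequents $A \Rightarrow B, C$ and $D, A \Rightarrow B$; the induction hypothesis applied to these yields precisely $\vdash A\rightarrow C\vee B$ and $\vdash D\wedge A\rightarrow B$, rule ${\sf N_{2}}$ then gives $\vdash (C\rightarrow D)\rightarrow(A\rightarrow B)$, and the side formulas $\Gamma', \Delta'$ are absorbed using axioms~1--4 with rule~9 (all applied to theorems, so Definition~\ref{hh}'s restrictions are respected). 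In short: replace your single-formula reading of $\supset$ by this sequent-level case split via inversion, and the rest of your argument --- base cases, the propositional rules via axioms~5, 1--4 and rules~10, 11, and the $(R_{\rightarrow})$ case --- goes through as you sketched it.
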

\begin{proof}
The proof proceeds by induction on the height of the derivation in {\sf  GWF}.
If the derivation has height $  0$, we have an axiom or an instance on $ \bot_{L} $. In both cases the claim holds. If the height is $n + 1$, we consider the last rule applied in the derivation.

We just prove for the rule $(LR_{\rightarrow} )$  as follows. The proof of other rules are similar and easy. Note that $( L_{\supset} )$ or $ (R_{\supset}) $ is not applicable, as their principal formulas lie outside $ {\sf  Frm} $.

Assume the last derivation is by the rule $(LR_{\rightarrow} )$.
In this case we have ${\sf  GWF_{N_{2}} }  \vdash_{n} \Gamma^{'} , C\rightarrow D \Rightarrow A\rightarrow B, \Delta^{'}$. We need to show that  $\vdash_{\sf WF_{N_{2}}}   \Gamma^{'} \bigwedge (C\rightarrow D) \rightarrow (A\rightarrow B)\bigvee \Delta^{'} $. By assumption we have $ \vdash_{n}C\supset D , A\Rightarrow B$.
Hence we have the following:
\begin{enumerate}
\item $ \vdash_{{\sf  WF_{N_{2}}}}  A\rightarrow C\vee B $~~~~~~~~~~~~~~~~by IH
\item$\vdash_{\sf WF_{N_{2}}}   D\wedge A\rightarrow B$~~~~~~~~~~~~~~~~by IH
\item$\vdash_{\sf WF_{N_{2}}} (C\rightarrow D)\rightarrow (A\rightarrow B)$~~~~~~~~~~~by 1, 2 and rule ${\sf N_{2}}$
\item$\vdash_{\sf WF_{N_{2}}} \Gamma^{'} \bigwedge (C\rightarrow D)\rightarrow (A\rightarrow B)\bigvee \Delta^{'}$
\end{enumerate}
\end{proof}

\section{Syntactic embedding of ${\sf  WF_{N_{2}}}$ in ${\sf M_{Nec}}$}\label{embedding}

 \begin{table}[h]
\caption{The sequent calculus ${\sf G3{\sf M_{Nec}} }$}\label{G3MN}
\begin{center}
\scalebox{0.85}{
\begin{tabular}{cc}
\hline\hline
\noalign{\smallskip}
\textbf{Initial sequents:} & $\qquad p, \Gamma \Rightarrow \Delta, p \qquad p$ propositional variable \\
\noalign{\smallskip}\hline\noalign{\smallskip}
\textbf{Propositional rules:} & \\[5pt]
\AxiomC{$A, B, \Gamma \Rightarrow \Delta$}
\RightLabel{\(L_{\wedge_{\square}}\)}
\UnaryInfC{$A \wedge B, \Gamma \Rightarrow \Delta$}
\DisplayProof
& 
\AxiomC{$\Gamma \Rightarrow \Delta, A$}
\AxiomC{$\Gamma \Rightarrow \Delta, B$}
\RightLabel{\(R_{\wedge_{\square}}\)}
\BinaryInfC{$\Gamma \Rightarrow \Delta, A \wedge B$}
\DisplayProof \\[10pt]

\AxiomC{$A, \Gamma \Rightarrow \Delta$}
\AxiomC{$B, \Gamma \Rightarrow \Delta$}
\RightLabel{\(L_{\lor_{\square}}\)}
\BinaryInfC{$A \lor B, \Gamma \Rightarrow \Delta$}
\DisplayProof
&
\AxiomC{$\Gamma \Rightarrow \Delta, A$}
\AxiomC{$\Gamma \Rightarrow \Delta, B$}
\RightLabel{\(R_{\lor_{\square}}\)}
\BinaryInfC{$\Gamma \Rightarrow \Delta, A \lor B$}
\DisplayProof\\[10pt]

\AxiomC{$\Gamma \Rightarrow \Delta, A$}
\AxiomC{$B, \Gamma \Rightarrow \Delta$}
\RightLabel{\(L_{\supset_{\square}}\)}
\BinaryInfC{$A \supset B, \Gamma \Rightarrow \Delta$}
\DisplayProof
&
\AxiomC{$A, \Gamma \Rightarrow \Delta, B$}
\RightLabel{\(R_{\supset_{\square}}\)}
\UnaryInfC{$\Gamma \Rightarrow \Delta, A \supset B$}
\DisplayProof \\[10pt]

\AxiomC{}
\RightLabel{\(L_{\bot_{\square}}\)}
\UnaryInfC{$\bot, \Gamma \Rightarrow \Delta$}
\DisplayProof \\[5pt]
\noalign{\smallskip}\hline\noalign{\smallskip}
\textbf{Modal rules:} & \\[5pt]
\AxiomC{$A \Rightarrow B$}
\RightLabel{\(LR_{M_{\square}}\)}
\UnaryInfC{$\square A, \Gamma \Rightarrow \Delta, \square B$}
\DisplayProof 
&
\AxiomC{$ \Rightarrow B$}
\RightLabel{\(R_{N_{\square}}\)}
\UnaryInfC{$ \Gamma \Rightarrow \Delta, \square B$}
\DisplayProof \\[5pt]
\hline\hline
\end{tabular}}
\end{center}
\end{table}
De Jongh and Shirmohammadzadeh Maleki, in \cite{Di}, demonstrated via a semantic approach that the classical modal logic $\mathsf{M_{Nec}}$ serves as the modal companion of the subintuitionistic logic $\mathsf{WF_{N_2}}$. In this section, we provide a syntactic proof of this result.

We consider the language of modal propositional logic, $ \mathcal{L}_{\square} $, defined by the set $ \lbrace \wedge, \vee, \supset, \bot, \square \rbrace $, and the symbol $ \leftrightarrow $ is defined as $ A \leftrightarrow B = (A \supset B) \wedge (B \supset A) $. A system of modal logic is \textit{classical} if and only if it is closed under the rule $ RE $ \emph{($ \frac{A \leftrightarrow B}{\square A \leftrightarrow \square B} $)}. The logic {\sf E} is the smallest classical modal logic. Completeness for {\sf E} is established with respect to modal neighborhood frames~\cite{Che}.

A system of modal logic is \textit{monotonic} if and only if  it is closed under RM \emph{($ \frac{A\rightarrow B}{\square A\rightarrow \square B} $)}. {\sf M} ({\sf EM}) is the smallest monotonic modal logic. Completeness holds for {\sf M} with respect to monotonic modal neighborhood frames~\cite{Hansen}. The logic {\sf M$_{{\sf Nec}}$} extends {\sf M} by adding the axiom $ \square \top $, or equivalently, the necessitation rule. $ {\sf M_{Nec}} $ is complete for monotonic neighborhood frames containing the unit.

The sequent calculus system ${\sf G3M_{Nec}}$ for the classical modal logic ${\sf M_{Nec}}$ was introduced by E. Orlandelli in \cite{Eu} (see Table \ref{G3MN}).
The sequent calculus system ${\sf G3M_{Nec}}$ satisfies several important properties, including: all sequents of the form $ A, \Gamma\Rightarrow \Delta, A $ are derivable; all logical rules are hp-invertible; weakenings and contractions are hp-admissible; and  the cut rule is admissible. Proofs for these properties can be found in \cite{Eu}.

Similar to the translation used by Do\u{s}en in \cite{Do}, we define the $ \square $-translation   from $ \mathcal{L}_{1}=\lbrace  \wedge, \vee, \rightarrow, \supset, \bot\rbrace $, to $ \mathcal{L}_{\square} $, the language of modal propositional logic. It is given by:
\begin{enumerate}
\item ~ $ p^{\square}= p$;
\item ~ $ \bot^{\square}= \bot$;
\item ~$(A\circ B)^{\square}=A^{\square}\circ B^{\square}  $ ~~for $ \circ \in \lbrace \wedge, \vee, \supset\rbrace $;
\item ~$(A\rightarrow B)^{\square}=\square (A^{\square}\supset B^{\square} ) $.
\end{enumerate}
We define $ \Gamma^{\square}:=\lbrace A^{\square} ~|~A \in \Gamma \rbrace$.

A logic $ L_{\square} $ in the language of modal logic, $ \mathcal{L}_{\square} $, is called a \textit{modal companion} of logic  $L$ if, for all $A$ in $\mathcal{L}$, $\vdash_{L } A$ if and only if  $\vdash_{L_{\square} }A^{\square}$.

\begin{thm}\label{3aa}
${\sf G3WF_{N_{2}}}\vdash \Gamma \Rightarrow\Delta$ if and only if ${\sf G3M_{Nec}}\vdash \Gamma^{\square} \Rightarrow\Delta^{\square}$.
\end{thm}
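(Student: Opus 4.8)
The plan is to prove both directions by induction on the height of the given derivation, relying on a rule-by-rule correspondence between the two calculi that is dictated by the $\square$-translation. The organising observation is that the translation commutes with $\wedge,\vee,\supset$ and replaces a strict implication by a boxed material implication, $(A\rightarrow B)^{\square}=\square(A^{\square}\supset B^{\square})$. Hence the initial sequents and the propositional rules of ${\sf GWF_{N_{2}}}$ and ${\sf G3M_{Nec}}$ correspond to one another immediately, and all of the real work is located in the interaction between the strict-implication rules $LR_{\rightarrow},R_{\rightarrow}$ on the one side and the modal rules $LR_{M_{\square}},R_{N_{\square}}$ on the other.

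For the left-to-right direction I would translate each ${\sf GWF_{N_{2}}}$ inference. Initial sequents and the $\wedge,\vee,\supset$-rules carry over directly, since the translation leaves these connectives untouched. For the rule $LR_{\rightarrow}$, whose premise is $C\supset D,A\Rightarrow B$, the induction hypothesis supplies ${\sf G3M_{Nec}}\vdash C^{\square}\supset D^{\square},A^{\square}\Rightarrow B^{\square}$; one application of $R_{\supset_{\square}}$ gives $C^{\square}\supset D^{\square}\Rightarrow A^{\square}\supset B^{\square}$, and then $LR_{M_{\square}}$ yields $\Gamma^{\square},\square(C^{\square}\supset D^{\square})\Rightarrow\Delta^{\square},\square(A^{\square}\supset B^{\square})$, which is exactly the translation of $\Gamma,C\rightarrow D\Rightarrow\Delta,A\rightarrow B$. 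The rule $R_{\rightarrow}$ is handled in the same way, with $R_{N_{\square}}$ replacing $LR_{M_{\square}}$.

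The converse direction requires a preliminary normalisation: using cut-admissibility for ${\sf G3M_{Nec}}$ I would assume the derivation of $\Gamma^{\square}\Rightarrow\Delta^{\square}$ is cut-free, so that by the subformula property every formula in it is a subformula of some $A^{\square}$ with $A\in{\sf Frm_{2}}$. The structural fact to establish is that every subformula of a translation is again a translation of a ${\sf Frm_{2}}$-formula, and that a formula whose principal connective is $\square$ must be $(E\rightarrow F)^{\square}$ for a strict implication; consequently every sequent of the derivation has the shape $\Phi^{\square}\Rightarrow\Psi^{\square}$ for ${\sf Frm_{2}}$-multisets $\Phi,\Psi$, uniquely determined because the translation is injective. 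With this established, the induction on height proceeds by cases on the last rule. Initial sequents and propositional rules pull back to the matching ${\sf GWF_{N_{2}}}$ rules. If the last rule is $LR_{M_{\square}}$, its principal boxes are forced to be $(C\rightarrow D)^{\square}$ and $(A\rightarrow B)^{\square}$, so its premise is $(C\supset D)^{\square}\Rightarrow(A\supset B)^{\square}$; the induction hypothesis gives ${\sf GWF_{N_{2}}}\vdash C\supset D\Rightarrow A\supset B$, and applying hp-invertibility of $R_{\supset}$ (Lemma~\ref{2}) I obtain $C\supset D,A\Rightarrow B$, after which $LR_{\rightarrow}$ delivers the desired $\Gamma\Rightarrow\Delta$. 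The rule $R_{N_{\square}}$ is treated identically through $R_{\rightarrow}$.

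I expect the main obstacle to lie wholly in the converse direction, in the verification that the derivation never leaves the image of the translation: one must argue that the principal formula of every modal step is genuinely a translated strict implication and that every intermediate sequent is itself a translated sequent. This is precisely where cut-elimination and the subformula property for ${\sf G3M_{Nec}}$ become indispensable, since they are what prevent the appearance of boxes that are not translations. The only other non-routine point is the appeal to $R_{\supset}$-inversion in ${\sf GWF_{N_{2}}}$, needed to reconcile the single-formula premise of $LR_{M_{\square}}$ with the two-formula premise of $LR_{\rightarrow}$.
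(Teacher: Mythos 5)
Your proposal is correct and takes essentially the same approach as the paper: both directions go by induction on derivation height, with $LR_{\rightarrow}$ and $R_{\rightarrow}$ simulated by $R_{\supset_{\square}}$ followed by $LR_{M_{\square}}$ or $R_{N_{\square}}$, and the converse resting on the observation that a boxed formula in a translated sequent must be $(E\rightarrow F)^{\square}$ for a strict implication, so the whole derivation stays in the image of the (injective) translation. The only immaterial differences are that you invert $R_{\supset}$ in ${\sf GWF_{N_{2}}}$ \emph{after} applying the induction hypothesis while the paper invokes hp-invertibility of $R_{\supset_{\square}}$ in ${\sf G3M_{Nec}}$ \emph{before} applying it, and that your preliminary cut-elimination step is redundant, since cut is not a primitive rule of the G3-style calculus ${\sf G3M_{Nec}}$, so its derivations are cut-free by definition.
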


\begin{proof}
Left to right: the proof is by induction on the derivation of $  \Gamma \Rightarrow\Delta $ in ${\sf G3WF_{N_{2}}}$. If $ \Gamma \Rightarrow\Delta $ is an initial sequent, then, since $ p^{\square}=p $ and $ \bot^{\square}=\bot $,  $ \Gamma^{\square} \Rightarrow\Delta^{\square} $ is also an initial sequent of 
$ {\sf G3M_{Nec}} $. For logical rules other than 
$ (LR_{\rightarrow}) $ and $ (R_{\rightarrow}) $, the induction hypothesis and the fact that 
the $ \square $-translation respects 
$ \wedge, \vee $, and 
$ \supset $ suffice to establish the claim. In the case of 
$ (R_{\rightarrow}) $, we have the following:
$$ \dfrac{A\Rightarrow B}{\Gamma\Rightarrow A\rightarrow B, \Delta^{'}} $$
We need to show that we have  $\Gamma^{\square} \Rightarrow \square (A^{\square}\supset B^{\square}),\Delta^{' \square}   $ in ${\sf G3M_{Nec}}$:
$$\dfrac{\dfrac{\dfrac{ }{A^{\square}\Rightarrow B^{\square}}I\!H}{\Rightarrow  A^{\square}\supset B^{\square}}R_{\supset_{\square}}}{\Gamma^{\square} \Rightarrow \square (A^{\square}\supset B^{\square}),\Delta^{' \square} } R_{N_{\square}} $$
In the case of 
$ (LR_{\rightarrow}) $, we have the following:
$$ \dfrac{C\supset D, A\Rightarrow B}{\Gamma^{'}, C\rightarrow D\Rightarrow A\rightarrow B, \Delta^{'}} $$
We need to show that we have  $\Gamma^{'\square}, \square (C^{\square}\supset D^{\square}) \Rightarrow \square (A^{\square}\supset B^{\square}),\Delta^{' \square}   $ in ${\sf G3M_{Nec}}$:
$$\dfrac{\dfrac{\dfrac{ }{C^{\square}\supset D^{\square} , A^{\square}\Rightarrow B^{\square}}I\!H}{C^{\square}\supset D^{\square}\Rightarrow  A^{\square}\supset B^{\square}}R_{\supset_{\square}}}{\Gamma^{' \square}, \square (C^{\square}\supset D^{\square}) \Rightarrow \square (A^{\square}\supset B^{\square}),\Delta^{' \square} } LR_{M_{\square}}  $$

Right to left:
the proof is by induction on the derivation of  $  \Gamma^{\square} \Rightarrow\Delta^{\square} $ in ${\sf G3M_{Nec}}$. If $  \Gamma^{\square} \Rightarrow\Delta^{\square} $ is an initial sequent, then there exists an atomic variable $p \in \Gamma^{\square} \cap\Delta^{\square}$. Since $p^{\square} = p$ and the translation $ \square $ is injective, we have $p \in \Gamma \cap \Delta$. Thus, $\Gamma \Rightarrow \Delta$ is an instance of $(\text{id})$ in ${\sf G3WF_{N_{2}}}$. 
The case where the rule used in the last step of the derivation of $  \Gamma^{\square} \Rightarrow\Delta^{\square} $ is $(L_{\bot})$ is similar. 
As ${\sf G3WF_{N_{2}}}$ and ${\sf G3M_{Nec}}$ share the same logical rules and the $ \square $-translation respects logical operators, the induction hypothesis along with an application of the same rule in ${\sf G3WF_{N_{2}}}$ yields the desired conclusion.
The only remaining cases are $( LR_{M_{\square} })$ and $ ( R_{N_{\square} }) $.

Let $\Gamma^{\square} \Rightarrow \Delta^{\square}$ is the conclusion of an instance of $( LR_{M_{\square}})$. Then, in ${\sf G3M_{Nec}}$, we have:  
$$\dfrac{A \Rightarrow B}{\Sigma, \square A \Rightarrow \square B, \Theta}  LR_{M_{\square} }$$  
That is, $\Gamma^{\square} \equiv \Sigma, \square A$ and $\Delta^{\square} \equiv \square B, \Theta$. 
By the definition of the $\square$-translation, the only formulas that can be translated into $\square$-formulas are strict implications. Therefore, $\Gamma$ must contain a formula $C \rightarrow D$ that is translated into $\square A$, where $A = C^{\square} \supset D^{\square}$. Similarly, $\Delta$ must contain a formula $E \rightarrow F$ that is translated into $\square B$, where $B = E^{\square} \supset F^{\square}$.  
Thus, we have ${\sf G3M_{Nec}} \vdash C^{\square} \supset D^{\square} \Rightarrow E^{\square} \supset F^{\square}$. By the hp-invertibility of $R_{\supset_{\square}}$ in ${\sf G3M_{Nec}}$, we conclude ${\sf G3M_{Nec}} \vdash C^{\square} \supset D^{\square}, E^{\square} \Rightarrow F^{\square}$.  
Let $\Gamma'$ and $\Delta'$ be multisets of formulas that are translated into $\Sigma$ and $\Theta$, respectively. Finally, we have:  
$$\dfrac{\dfrac{}{C \supset D, E \Rightarrow F} I\!H}{\Gamma', C \rightarrow D \Rightarrow E \rightarrow F, \Delta'} LR\rightarrow$$  
showing that ${\sf G3WF_{N_2}} \vdash \Gamma \Rightarrow \Delta$.
\end{proof}

\begin{thm}
Let $ \Gamma $ and $ \Delta $ be finite subset of {\sf Frm}.
$$ \vdash_{{\sf WF_{N_2}}} \bigwedge\Gamma\rightarrow\bigvee\Delta~ ~\rm{ iff}~ ~ \vdash_{{\sf M_{Nec}}} (\bigwedge\Gamma\rightarrow\bigvee\Delta)^{\square} $$
\end{thm}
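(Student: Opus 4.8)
The plan is to read the biconditional as a chain of equivalences, most of which are already available in the excerpt. Write $A := \bigwedge\Gamma \rightarrow \bigvee\Delta$, a single strict-implicational formula of ${\sf Frm}$. First I would invoke the two results labelled Theorem~\ref{3a}, which together state that $\vdash_{{\sf WF_{N_2}}} \bigwedge\Gamma \rightarrow \bigvee\Delta$ holds iff ${\sf GWF_{N_2}} \vdash \Gamma \Rightarrow \Delta$. Next, Theorem~\ref{3aa} gives ${\sf GWF_{N_2}} \vdash \Gamma \Rightarrow \Delta$ iff ${\sf G3M_{Nec}} \vdash \Gamma^{\square} \Rightarrow \Delta^{\square}$ (identifying ${\sf GWF_{N_2}}$ with ${\sf G3WF_{N_2}}$). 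Thus the whole theorem reduces to the single remaining link, which I would isolate as a lemma: ${\sf G3M_{Nec}} \vdash \Gamma^{\square} \Rightarrow \Delta^{\square}$ iff $\vdash_{{\sf M_{Nec}}} A^{\square}$.

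To set up that lemma I first compute the translation. Since the $\square$-translation commutes with $\wedge, \vee, \supset$ and sends the head $\rightarrow$ to $\square(\cdot \supset \cdot)$, one has $(\bigwedge\Gamma)^{\square}=\bigwedge\Gamma^{\square}$ and $(\bigvee\Delta)^{\square}=\bigvee\Delta^{\square}$, so $A^{\square}=\square(\bigwedge\Gamma^{\square}\supset\bigvee\Delta^{\square})$. For the forward direction of the lemma I would take a ${\sf G3M_{Nec}}$-derivation of $\Gamma^{\square}\Rightarrow\Delta^{\square}$ and apply $L_{\wedge_{\square}}$ and $R_{\lor_{\square}}$ to fold the antecedent and succedent into a single formula each, obtaining $\bigwedge\Gamma^{\square}\Rightarrow\bigvee\Delta^{\square}$; then $R_{\supset_{\square}}$ yields $\Rightarrow\bigwedge\Gamma^{\square}\supset\bigvee\Delta^{\square}$, and the necessitation rule $R_{N_{\square}}$ (with empty side multisets) yields $\Rightarrow A^{\square}$. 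Soundness of ${\sf G3M_{Nec}}$ for ${\sf M_{Nec}}$ then gives $\vdash_{{\sf M_{Nec}}} A^{\square}$.

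For the converse direction of the lemma, from $\vdash_{{\sf M_{Nec}}} A^{\square}$ the completeness of ${\sf G3M_{Nec}}$ produces a derivation of $\Rightarrow \square C$, where $C=\bigwedge\Gamma^{\square}\supset\bigvee\Delta^{\square}$. The point is to invert this back to $\Gamma^{\square}\Rightarrow\Delta^{\square}$. Here I would argue by root analysis: the sequent $\Rightarrow\square C$ is not initial, $\square C$ is principal for no propositional rule, and $LR_{M_{\square}}$ is excluded because it requires a boxed formula in the (here empty) antecedent; hence the last rule must be $R_{N_{\square}}$, so $\Rightarrow C$ is already derivable. Repeated use of the hp-invertibility of $R_{\supset_{\square}}$, $L_{\wedge_{\square}}$ and $R_{\lor_{\square}}$ (all logical rules of ${\sf G3M_{Nec}}$ are hp-invertible by \cite{Eu}) then peels $C$ apart into $\Gamma^{\square}\Rightarrow\Delta^{\square}$. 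Running Theorem~\ref{3aa} and Theorems~\ref{3a} backwards closes the loop and gives $\vdash_{{\sf WF_{N_2}}}\bigwedge\Gamma\rightarrow\bigvee\Delta$.

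I expect the main obstacle to be precisely this last-rule analysis in the converse direction, namely justifying that a (cut-free) ${\sf G3M_{Nec}}$-proof of $\Rightarrow \square C$ must conclude by $R_{N_{\square}}$; this is where I lean on the structural meta-theory of ${\sf G3M_{Nec}}$ (admissibility of cut, hp-invertibility) imported from \cite{Eu}. A purely semantic alternative, showing $\vdash_{{\sf M_{Nec}}}\square C$ iff $\vdash_{{\sf M_{Nec}}} C$ via a monotonic-neighborhood-frame construction, is also possible but messier, so I prefer the proof-theoretic route. I would also remark that the degenerate cases $\Gamma=\emptyset$ or $\Delta=\emptyset$ (empty conjunction and empty disjunction) are treated under the same conventions already used in Theorems~\ref{3a}, so they require no separate argument.
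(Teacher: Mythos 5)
Your proposal is correct and follows essentially the same route as the paper: both directions reduce the statement to Theorems~\ref{3a} and~\ref{3aa}, and the right-to-left direction hinges, exactly as in the paper, on cut-freeness of ${\sf G3M_{Nec}}$ plus the observation that the last rule in a derivation of $\Rightarrow \square(\bigwedge\Gamma^{\square}\supset\bigvee\Delta^{\square})$ must be $(R_{N_{\square}})$ (your root analysis is in fact slightly more careful than the paper's, since you explicitly rule out $LR_{M_{\square}}$ via the empty antecedent), followed by hp-invertibility of the logical rules. The only cosmetic difference is in the left-to-right direction, where you fold $\Gamma^{\square}\Rightarrow\Delta^{\square}$ into $\Rightarrow A^{\square}$ inside the sequent calculus and then invoke soundness, whereas the paper passes to $\vdash_{{\sf M_{Nec}}}\bigwedge\Gamma^{\square}\supset\bigvee\Delta^{\square}$ first and applies necessitation at the Hilbert level---an inessential reordering of the same steps.
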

\begin{proof}
Left to right:
if \( \vdash_{{\sf WF_{N_2}}} \bigwedge\Gamma \rightarrow \bigvee\Delta \), then by Theorems~\ref{3a} and~\ref{3aa}, we have 
${\sf G3M_{Nec}} \vdash \Gamma^{\square} \Rightarrow \Delta^{\square}.$
Since \( {\sf G3M_{Nec}} \) is complete with respect to \( {\sf M_{Nec}} \), it follows that 
$\vdash_{{\sf M_{Nec}}} \bigwedge\Gamma^{\square} \supset \bigvee\Delta^{\square}.$
By applying the necessitation rule, we conclude
$\vdash_{{\sf M_{Nec}}} \square(\bigwedge\Gamma^{\square} \supset \bigvee\Delta^{\square}),$
which is equivalent to 
$\vdash_{{\sf M_{Nec}}}(\bigwedge\Gamma \rightarrow \bigvee\Delta)^{\square}.$

Right to left:
let \( \vdash_{{\sf M_{Nec}}} (\bigwedge\Gamma \rightarrow \bigvee\Delta)^{\square} \). This implies that 
$\vdash_{{\sf M_{Nec}}} \square(\bigwedge\Gamma^{\square} \supset \bigvee\Delta^{\square}),$
and hence ${\sf G3M_{Nec}} \vdash \Rightarrow \square(\bigwedge\Gamma^{\square} \supset \bigvee\Delta^{\square}).$
Given that \( {\sf G3M_{Nec}} \) is cut-admissible and the only rule with a principal boxed formula on the right-hand side is \( (R_{N_{\square}}) \), it follows that ${\sf G3M_{Nec}} \vdash \Rightarrow (\bigwedge\Gamma^{\square} \supset \bigvee\Delta^{\square}).$
By the invertibility of logical rules in \( {\sf G3M_{Nec}} \), we obtain
${\sf G3M_{Nec}} \vdash \Gamma^{\square} \Rightarrow \Delta^{\square},$
which leads to 
${\sf G3WF_{N_2}} \vdash \Gamma \Rightarrow \Delta.$
Thus, 
$\vdash_{{\sf WF_{N_2}}} \bigwedge\Gamma \rightarrow \bigvee\Delta.$
\end{proof}

\section{The system $ {\sf GWF^{s}_{N_{2}}} $ }\label{some}

\begin{table}[h]
\caption{The sequent calculus ${\sf  GWF^{s}_{N_{2}}} $}\label{wfn2s}
\begin{center}
\scalebox{0.85}{
\begin{tabular}{cc}
\hline\hline
\noalign{\smallskip}
\textbf{Initial sequents:} & \\[5pt]

\AxiomC{}
\RightLabel{\(id^{s}\)}
\UnaryInfC{$p, \Gamma \Rightarrow  p$}
\DisplayProof
&
\AxiomC{}
\RightLabel{\(L_{\bot}^{s}\)}
\UnaryInfC{$\bot, \Gamma \Rightarrow Z$}
\DisplayProof\\[10pt]

\multicolumn{2}{l}{ \( p \) is an atomic variable.} \\[10pt]
\noalign{\smallskip}\hline\noalign{\smallskip}
\textbf{logical rules:} & \\[5pt]
\AxiomC{$X, Y, \Gamma \Rightarrow Z$}
\RightLabel{\(L^{s}_{\wedge}\)}
\UnaryInfC{$X \wedge Y, \Gamma \Rightarrow Z$}
\DisplayProof
& 
\AxiomC{$\Gamma \Rightarrow   X$}
\AxiomC{$\Gamma \Rightarrow   Y$}
\RightLabel{\(R^{s}_{\wedge}\)}
\BinaryInfC{$\Gamma \Rightarrow  X \wedge Y$}
\DisplayProof \\[10pt]

\AxiomC{$X, \Gamma \Rightarrow Z$}
\AxiomC{$Y, \Gamma \Rightarrow Z$}
\RightLabel{\(L^{s}_{\lor}\)}
\BinaryInfC{$X \lor Y, \Gamma \Rightarrow Z$}
\DisplayProof\\[10pt]

\AxiomC{$\Gamma \Rightarrow X$}
\RightLabel{\(R^{s}_{\vee_{l}}\)}
\UnaryInfC{$\Gamma \Rightarrow  X \vee Y$}
\DisplayProof
&
\AxiomC{$\Gamma \Rightarrow Y$}
\RightLabel{\(R^{s}_{\vee_{r}}\)}
\UnaryInfC{$\Gamma \Rightarrow X \vee Y$}
\DisplayProof\\[10pt]

\AxiomC{$A\supset B, \Gamma \Rightarrow A$}
\AxiomC{$B, \Gamma \Rightarrow Z$}
\RightLabel{\(L^{s}_{\supset}\)}
\BinaryInfC{$A \supset B, \Gamma \Rightarrow Z$}
\DisplayProof
&
\AxiomC{$A, \Gamma \Rightarrow B$}
\RightLabel{\(R^{s}_{\supset}\)}
\UnaryInfC{$\Gamma \Rightarrow A \supset B$}
\DisplayProof \\[10pt]

\AxiomC{$C\supset D, A \Rightarrow  B$}
\RightLabel{\(LR^{s}_{\to} \)}
\UnaryInfC{$ \Gamma , C\rightarrow D\Rightarrow  A \rightarrow B$}
\DisplayProof
&
\AxiomC{$A \Rightarrow  B$}
\RightLabel{\(R^{s}_{\to} \)}
\UnaryInfC{$\Gamma \Rightarrow A \rightarrow B$}
\DisplayProof\\[14pt]

\multicolumn{2}{l}{ \( A \), \( B \), \( C \), \( D \) \( \in \) \( \sf Frm \) and \( X \), \( Y \), \( Z \) \( \in \)\( \sf Frm_2 \).  } \\[10pt]

\noalign{\smallskip}\hline\hline
\end{tabular}}
\end{center}
\end{table}

Similar to the work of Aboolian and Alizadeh in \cite{ab}, where they introduced a single-succedent variant of \( \sf{GF} \), in this paper, we aim to extend this approach to the logic \( \sf{GWF_{N_2}} \). This extension is for theoretical interest.

\begin{dfn}
The rules of the calculus ${\sf  GWF^{s}_{N_{2}}}$ for the basic subintuitionistic logic ${\sf  WF_{N_{2}}}$ reported in Table \ref{wfn2s}.
\end{dfn}
\begin{lem}\label{aa}
The sequent $ X, \Gamma\Rightarrow  X $ is derivable in ${\sf  GWF^{s}_{N_{2}}}$  for an arbitrary formula $ X $ and arbitrary side formula $ \Gamma $. 
\end{lem}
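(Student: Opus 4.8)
The plan is to argue by induction on the weight $w(X)$ of the formula $X \in {\sf Frm_2}$, proving $X, \Gamma \Rightarrow X$ for \emph{every} choice of side multiset $\Gamma$ simultaneously, so that at each inductive step the hypothesis may be invoked with whatever side formulas are convenient. For the base cases, if $X = p$ is atomic then $X, \Gamma \Rightarrow X$ is an instance of $(id^{s})$, and if $X = \bot$ it is an instance of $(L_{\bot}^{s})$ (with $Z = \bot$).

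The conjunction and disjunction cases are routine applications of the corresponding left and right rules. If $X = X_1 \wedge X_2$, I would apply the induction hypothesis to obtain $X_1, X_2, \Gamma \Rightarrow X_1$ and $X_1, X_2, \Gamma \Rightarrow X_2$ (each $X_i$ has strictly smaller weight, and $X_{3-i}, \Gamma$ is merely a side multiset), combine them with $(R^{s}_{\wedge})$ to get $X_1, X_2, \Gamma \Rightarrow X_1 \wedge X_2$, and conclude with $(L^{s}_{\wedge})$. If $X = X_1 \vee X_2$, I would use the induction hypothesis to get $X_1, \Gamma \Rightarrow X_1$ and $X_2, \Gamma \Rightarrow X_2$, apply $(R^{s}_{\vee_{l}})$ and $(R^{s}_{\vee_{r}})$ respectively to reach succedent $X_1 \vee X_2$, and then join the two with $(L^{s}_{\lor})$.

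The substantive cases are the two implications, where the interplay between the material implication $\supset$ and the strict implication $\rightarrow$ must be exploited. For $X = A \supset B$ with $A, B \in {\sf Frm}$, I would apply $(R^{s}_{\supset})$, reducing the goal $A \supset B, \Gamma \Rightarrow A \supset B$ to the sequent $A, A \supset B, \Gamma \Rightarrow B$; this in turn is obtained by $(L^{s}_{\supset})$ from its two premises $A \supset B, A, \Gamma \Rightarrow A$ and $B, A, \Gamma \Rightarrow B$, both of which are supplied directly by the induction hypothesis applied to $A$ and to $B$ (of strictly smaller weight, with $A \supset B$ and $A$ serving only as side formulas). For $X = A \rightarrow B$ with $A, B \in {\sf Frm}$, I would use $(LR^{s}_{\to})$, so that the goal $A \rightarrow B, \Gamma \Rightarrow A \rightarrow B$ reduces to its single premise $A \supset B, A \Rightarrow B$; this ``material'' sequent is then derived by $(L^{s}_{\supset})$ from $A \supset B, A \Rightarrow A$ and $B, A \Rightarrow B$, again both instances of the induction hypothesis at $A$ and $B$.

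The hard part — indeed the only place the argument is more than bookkeeping — is the strict-implication case: one cannot close $A \rightarrow B, \Gamma \Rightarrow A \rightarrow B$ directly, because $\rightarrow$ has no genuine left rule in ${\sf GWF^{s}_{N_{2}}}$, so the identity for a strict implication must be routed through the associated material implication $A \supset B$ via $(LR^{s}_{\to})$ and then discharged by $(L^{s}_{\supset})$ together with the induction hypothesis at the smaller formulas $A$ and $B$. I would also remark that the induction is legitimately carried out on weight: in both implication cases the auxiliary formula $A \supset B$ figures only as a side formula or as a principal formula newly introduced, and the induction hypothesis is ever invoked solely at $A$ and $B$, whose weights are strictly below $w(A \rightarrow B) = w(A)+w(B)+2$ and $w(A \supset B) = w(A)+w(B)+1$, so the descent is well-founded.
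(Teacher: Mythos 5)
Your proof is correct and follows essentially the same route as the paper, which simply defers to Lemma~\ref{1} and in turn to Lemma~2.2 of~\cite{ab}: the standard induction on the weight of $X$ with $\Gamma$ quantified universally, handling $\wedge$, $\vee$, and $\supset$ by their left/right rules and routing the strict-implication case through $(LR^{s}_{\to})$ to the premise $A \supset B, A \Rightarrow B$, discharged by $(L^{s}_{\supset})$ and the induction hypothesis at $A$ and $B$. All rule applications check against Table~\ref{wfn2s} (in particular you correctly retain $A \supset B$ in the first premise of $(L^{s}_{\supset})$), so there is nothing to correct.
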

\begin{proof}
The proof is similar  to the proof of Lemma \ref{1}.
\end{proof}

\begin{lem}\label{bb}
The left rule of weakening is hp-admissible in ${\sf  GWF^{s}_{N_{2}}}$.
$$ \frac{\Gamma\Rightarrow A}{D, \Gamma\Rightarrow A}L^{s}_{w}$$
\end{lem}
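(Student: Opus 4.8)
The plan is to argue by induction on the height $n$ of the given derivation of $\Gamma\Rightarrow A$ in ${\sf GWF^{s}_{N_{2}}}$, showing in each case that $D,\Gamma\Rightarrow A$ is derivable with height at most $n$. This is the standard height-preserving weakening argument, entirely analogous to Lemma~\ref{1b} for the multi-succedent system, and the single-succedent restriction causes no trouble here because we weaken only on the left, where an arbitrary context is always allowed.

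For the base case $n=0$, the sequent $\Gamma\Rightarrow A$ is an instance of $(id^{s})$ or $(L^{s}_{\bot})$. In both schemes the antecedent already carries an arbitrary side context, so inserting $D$ yields $D,\Gamma\Rightarrow A$, which is again an initial sequent of the same form; hence $\vdash_{0} D,\Gamma\Rightarrow A$.

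For the inductive step, I assume the claim for derivations of height at most $n$ and suppose $\vdash_{n+1}\Gamma\Rightarrow A$ ends in a rule $(R)$. The rules split into two groups. For every rule whose premises retain the full left context of the conclusion, namely $(L^{s}_{\wedge})$, $(R^{s}_{\wedge})$, $(L^{s}_{\lor})$, $(R^{s}_{\vee_{l}})$, $(R^{s}_{\vee_{r}})$, $(L^{s}_{\supset})$ and $(R^{s}_{\supset})$, I apply the induction hypothesis to each premise to insert $D$ into its antecedent and then reapply $(R)$. Since the induction hypothesis preserves height and the rule is applied once more, the resulting derivation of $D,\Gamma\Rightarrow A$ has height at most $n+1$.

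The two strict-implication rules $(LR^{s}_{\to})$ and $(R^{s}_{\to})$ are handled even more directly, and this is the only case where one should pause, although it turns out to be the easiest rather than the hard case. Their premises, $C\supset D', A\Rightarrow B$ and $A\Rightarrow B$ respectively, do not mention the left context of the conclusion at all. Hence I need not touch the premise: I simply reapply the same rule with the enlarged context $D,\Gamma$ in place of $\Gamma$, obtaining $D,\Gamma,C\rightarrow D'\Rightarrow A\rightarrow B$ (respectively $D,\Gamma\Rightarrow A\rightarrow B$) from the identical premise and at the same height. Thus the rules that are usually the obstacle for weakening become trivial here precisely because the context is discarded when passing to the premise; the only bookkeeping point is to rename the rule's own implication components (here written $D'$) so that they do not clash with the weakening formula $D$.
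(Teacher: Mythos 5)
Your proof is correct and takes essentially the same approach as the paper: the paper disposes of the lemma in one sentence by observing that every rule and initial sequent of ${\sf GWF^{s}_{N_{2}}}$ admits an arbitrary side context in the antecedent of its conclusion, and your induction --- propagating $D$ through the context-preserving rules and absorbing it directly into the arbitrary context at $(LR^{s}_{\to})$ and $(R^{s}_{\to})$, whose premises discard the context --- is precisely the formal version of that observation, height preservation included. Nothing further is needed.
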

\begin{proof}
Left weakening is inherently ensured by allowing arbitrary side formulas in the antecedent of rule conclusions.
\end{proof}

\begin{lem}
The rules $ (L^{s}_{\wedge}) $, $(R^{s}_{\wedge})$, $(L^{s}_{\vee})$and $(R^{s}_{\supset})$ are hp-invertible in ${\sf  GWF^{s}_{N_{2}}}$. Moreover, the rule $(L^{s}_{\supset})$ is
hp-invertible w.r.t. second premise.
\end{lem}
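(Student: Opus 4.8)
The plan is to prove each hp-invertibility claim by induction on the height of the derivation of the conclusion, following the standard G3-style template already used for Lemma~\ref{2}. For each rule, I would fix a derivation of height $n$ of the sequent that appears as the conclusion of that rule (with the principal formula present), and show that derivations of the premise(s) of height at most $n$ exist. The base case ($n=0$) is handled by noting that an initial sequent $p,\Gamma\Rightarrow p$ or $\bot,\Gamma\Rightarrow Z$ remains an initial sequent after the principal compound formula is replaced by its components, since the active atomic or $\bot$ formula lies in the side context; hence the premises are themselves initial sequents of height $0$.

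For the inductive step of each rule, I would distinguish whether the principal formula of the \emph{last} rule applied coincides with the formula whose invertibility we are establishing. If the two coincide, the premise we seek is already available as a premise of that last inference, so nothing more is needed. If they differ, the principal formula of the rule we are inverting occurs among the side formulas of the last inference; I would then apply the induction hypothesis to the premise(s) of that last inference to obtain the decomposed sequents of strictly smaller height, and reapply the same last rule to recombine them, producing a derivation of the desired premise of height at most $n$. This commutation argument goes through uniformly for $(L^{s}_{\wedge})$, $(R^{s}_{\wedge})$, $(L^{s}_{\vee})$ and $(R^{s}_{\supset})$ because in the single-succedent calculus these rules only touch the antecedent contexts or a single succedent slot, so the principal formula of any intervening rule can be carried along untouched.

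The case requiring the most care is $(L^{s}_{\supset})$, and this is where I expect the main obstacle. Because $(L^{s}_{\supset})$ has two premises, $A\supset B,\Gamma\Rightarrow A$ and $B,\Gamma\Rightarrow Z$, and the first premise \emph{reintroduces} the principal formula $A\supset B$ in its own antecedent, one cannot claim hp-invertibility with respect to the first premise: decomposing $A\supset B$ there would not be height-preserving, which is exactly why the statement restricts the claim to the \emph{second} premise. For invertibility w.r.t.\ the second premise, I would again induct on height, and in the subcase where $A\supset B$ is principal in the last rule I read off $B,\Gamma\Rightarrow Z$ directly; in every other subcase $A\supset B$ is a side formula, so the induction hypothesis applied to the premises of the last inference yields the required $B,\Gamma\Rightarrow Z$ of smaller height, and reapplying the last rule restores height at most $n$. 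The only delicate points are the rules $(LR^{s}_{\to})$ and $(R^{s}_{\to})$, whose premises discard the side contexts; but since their principal formula is strict implicational and the succedent $Z$ in question is arbitrary, one checks that $A\supset B$ cannot be the active formula of these rules and the argument reduces to a context-manipulation already covered above.
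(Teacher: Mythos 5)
Your proposal is correct and takes essentially the same route as the paper, whose proof simply defers to the standard argument of Lemma 5.4 in \cite{ab}: induction on derivation height with a commutation step for non-principal cases, together with the observation that $(LR^{s}_{\to})$ and $(R^{s}_{\to})$ can never have the formula being inverted as active, and that only the second premise of $(L^{s}_{\supset})$ admits inversion. The one detail you leave tacit is that when inverting $(R^{s}_{\supset})$ past an application of $(L^{s}_{\supset})$, the first premise carries the succedent $C$ rather than $A\supset B$, so before reapplying the rule one must add $A$ to its antecedent via the hp-admissible left weakening of Lemma~\ref{bb} --- a routine step subsumed by the cited template.
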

\begin{proof}
The proof is similar to the proof of Lemma 5.4 in \cite{ab}.
\end{proof}

\begin{lem}\label{1c}
The left rule of contraction is hp-admissible in ${\sf  GWF^{s}_{N_{2}}}$.
$$\frac{D, D, \Gamma\Rightarrow A}{D, \Gamma\Rightarrow A}L^{s}_{c}$$
\end{lem}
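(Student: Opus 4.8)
The plan is to argue by induction on the height $n$ of the given derivation of $D, D, \Gamma \Rightarrow A$, exactly in the spirit of the contraction argument for the multi-succedent calculus ${\sf GWF_{N_{2}}}$ (Lemma~\ref{1c}), but now leaning on the invertibility facts just established for the single-succedent system. Because ${\sf GWF^{s}_{N_{2}}}$ has a single formula in the succedent, only the antecedent is ever duplicated, so every case reduces to manipulating the left-hand multiset. For the base case $n=0$ the sequent $D,D,\Gamma\Rightarrow A$ is an instance of $id^{s}$ or $L^{s}_{\bot}$; deleting one copy of $D$ leaves the atom $p$ (respectively the $\bot$) that witnesses the initial sequent still present in $D,\Gamma$, so $D,\Gamma\Rightarrow A$ is again initial.

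For the inductive step I would split, as usual, according to whether the contracted formula $D$ is a side formula of the last rule or its principal formula. When $D$ is a side formula, both copies sit in the contexts of the premises, so I apply the induction hypothesis to each premise and reapply the same rule, keeping the height; this disposes of all passive occurrences and, in particular, of every application of $R^{s}_{\wedge}$, $R^{s}_{\vee_{l}}$, $R^{s}_{\vee_{r}}$ and $R^{s}_{\supset}$. The rules $R^{s}_{\to}$ and $LR^{s}_{\to}$ need no real work at all: their premise ($A\Rightarrow B$, respectively $C\supset D, A\Rightarrow B$) does not mention the context $\Gamma$, so the same premise yields the contracted conclusion by one further application of the rule, even when $D$ is the principal $C\rightarrow D$ of $LR^{s}_{\to}$, since that formula is reintroduced by the rule and the surplus copy lives in the freely chosen $\Gamma$.

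The principal cases of the left rules are where invertibility enters. For $(L^{s}_{\wedge})$ with $D=X\wedge Y$ the premise is $X,Y,X\wedge Y,\Gamma\Rightarrow A$ at height $\le n$ (one copy decomposed, one surviving in the context); inverting the surviving $X\wedge Y$ by hp-invertibility of $(L^{s}_{\wedge})$ gives $X,Y,X,Y,\Gamma\Rightarrow A$, two uses of the induction hypothesis give $X,Y,\Gamma\Rightarrow A$, and $(L^{s}_{\wedge})$ restores $X\wedge Y,\Gamma\Rightarrow A$ at height $\le n+1$. The case $(L^{s}_{\vee})$ with $D=X\vee Y$ is analogous: I invert the surviving disjunction in each of the two premises, contract the resulting $X,X,\dots$ and $Y,Y,\dots$ by the induction hypothesis, and reapply $(L^{s}_{\vee})$.

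I expect the one genuinely delicate case to be $(L^{s}_{\supset})$ with $D=E\supset F$ principal, because this rule is asymmetric: it copies the principal formula into its left premise but not into its right. The last inference reads
$$\dfrac{E \supset F,\, E \supset F,\, \Gamma \Rightarrow E \qquad F,\, E \supset F,\, \Gamma \Rightarrow A}{E \supset F,\, E \supset F,\, \Gamma \Rightarrow A}\,(L^{s}_{\supset})$$
so the left premise really does carry two copies of $E\supset F$, and the induction hypothesis yields $E\supset F,\Gamma\Rightarrow E$ at height $\le n$. The right premise, however, retains a residual $E\supset F$ that ordinary contraction cannot touch; here I would use the hp-invertibility of $(L^{s}_{\supset})$ with respect to its second premise (the extra clause of the preceding inversion lemma) to pass from $F, E\supset F,\Gamma\Rightarrow A$ to $F,F,\Gamma\Rightarrow A$ at height $\le n$, and then contract by the induction hypothesis to $F,\Gamma\Rightarrow A$. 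A final $(L^{s}_{\supset})$ applied to the two contracted premises $E\supset F,\Gamma\Rightarrow E$ and $F,\Gamma\Rightarrow A$ produces $E\supset F,\Gamma\Rightarrow A$ at height $\le n+1$. The crux of the whole argument is precisely that second-premise invertibility compensates for the principal implication not being copied to the right; it is worth noting explicitly that first-premise invertibility (which is not claimed for $(L^{s}_{\supset})$) is never needed.
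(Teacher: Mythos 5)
Your proof is correct and takes essentially the same route as the paper's: induction on the height of the derivation, with the $(LR^{s}_{\rightarrow})$ and $(R^{s}_{\rightarrow})$ cases handled by simply reapplying the rule to its context-free premise (the only case the paper spells out), and the remaining principal cases discharged via the hp-invertibility lemma, exactly as in the argument of Aboolian and Alizadeh to which the paper implicitly defers. In particular, your handling of the principal $(L^{s}_{\supset})$ case through second-premise invertibility is precisely the intended argument, just written out in more detail than the paper provides.
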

\begin{proof}
We prove this by induction on the height of the derivation of the premise, focusing specifically on the case where the final rule applied in the derivation is $(LR_{\rightarrow}^{s})$. If one occurrence of $D$ is not the principal formula in the derivation of $D, D, \Gamma \Rightarrow A$, we apply $(LR_{\rightarrow}^{s})$ to the premise of the last rule, where the side formula contains one fewer occurrence of $D$, to derive $D, \Gamma \Rightarrow A$. If neither of the occurrences of $D$ is principal, we apply $(LR_{\rightarrow}^{s})$ in the same way as before, ensuring that the side formula contains one fewer occurrence of $D$, to obtain $D, \Gamma \Rightarrow A$.
\end{proof}
\begin{thm}\label{1d}
The rule of cut  is addmissible in ${\sf  GWF^{s}_{N_{2}}}$.
$$ \frac{\Gamma\Rightarrow  D~~~~~~D, \Gamma^{'}\Rightarrow A}{\Gamma,\Gamma^{'} \Rightarrow A}Cut^{s}$$
\end{thm}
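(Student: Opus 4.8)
The plan is to mirror the cut-elimination argument already carried out for ${\sf GWF_{N_{2}}}$ and its single-succedent analogue in~\cite{ab}: I would argue by a principal induction on the weight $w(D)$ of the cut formula with a subordinate induction on the cut-height, and at each stage remove a topmost cut either by lowering the cut-height or by replacing it with cuts on formulas of strictly smaller weight. Since ${\sf GWF^{s}_{N_{2}}}$ is single-succedent, no right structural rules are needed; the only auxiliary facts are the derivability of $X,\Gamma\Rightarrow X$ (Lemma~\ref{aa}), the hp-admissibility of left weakening (Lemma~\ref{bb}) and of left contraction (Lemma~\ref{1c}).

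First I would clear the cases in which a premise is an initial sequent: if $\Gamma\Rightarrow D$ is $(id^{s})$ with $D$ its distinguished atom, or is $(L^{s}_{\bot})$, the conclusion follows by a left weakening of the right premise or is itself initial, and symmetrically when the right premise is initial. Next are the cases where $D$ is not principal in the last rule of a premise, which are settled by permuting the cut upward so that the cut-height decreases. Because the succedent holds a single formula, $D$ can fail to be principal on the left only when the last rule is $(L^{s}_{\wedge})$, $(L^{s}_{\vee})$ or $(L^{s}_{\supset})$, and these permutations are routine (the $(L^{s}_{\supset})$ case using Lemma~\ref{bb} to align side formulas). The one feature demanding attention is that $(R^{s}_{\to})$ and $(LR^{s}_{\to})$ discard the antecedent side formulas of their conclusions; hence, whenever $D$ sits merely as a side formula of such a rule closing the right premise, the cut is vacuous — the rule's premise never mentions $D$ — and $\Gamma,\Gamma'\Rightarrow A$ is recovered by simply re-applying that rule with side context $\Gamma,\Gamma'$.

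The substance is in the principal cases, where $w(D)$ drops. For $D=X\wedge Y$, $D=X\vee Y$ and $D=P\supset Q$ the usual reductions apply, trading the cut for cuts on the immediate subformulas and absorbing duplicated context by left contraction (Lemma~\ref{1c}), with the looping premise of $(L^{s}_{\supset})$ dealt with by an auxiliary cut of smaller cut-height. The main obstacle is $D=C\rightarrow E$ principal in both premises. Here the right premise necessarily ends in $(LR^{s}_{\to})$, coming from $C\supset E,P\Rightarrow Q$ with conclusion $C\rightarrow E,\Gamma'\Rightarrow P\rightarrow Q$, while the left premise ends in either $(R^{s}_{\to})$ or $(LR^{s}_{\to})$. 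In the first orientation I would rewrite
\[
\frac{\dfrac{C\Rightarrow E}{\Gamma\Rightarrow C\rightarrow E}\,(R^{s}_{\to})\qquad \dfrac{C\supset E, P\Rightarrow Q}{C\rightarrow E,\Gamma'\Rightarrow P\rightarrow Q}\,(LR^{s}_{\to})}{\Gamma,\Gamma'\Rightarrow P\rightarrow Q}\,Cut^{s}
\]
as a derivation whose sole cut is on $C\supset E$, of weight $w(C)+w(E)+1<w(C)+w(E)+2=w(C\rightarrow E)$:
\[
\frac{\dfrac{\dfrac{C\Rightarrow E}{\Rightarrow C\supset E}\,(R^{s}_{\supset})\qquad C\supset E, P\Rightarrow Q}{P\Rightarrow Q}\,Cut^{s}}{\Gamma,\Gamma'\Rightarrow P\rightarrow Q}\,(R^{s}_{\to})
\]
In the second orientation the left premise comes from $C'\supset D',C\Rightarrow E$, so $(R^{s}_{\supset})$ yields $C'\supset D'\Rightarrow C\supset E$, the same weight-reducing cut gives $C'\supset D',P\Rightarrow Q$, and a closing $(LR^{s}_{\to})$ reintroduces the strict implication to produce $\Gamma,\Gamma'\Rightarrow P\rightarrow Q$. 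The delicate point — and the heart of the proof — is exactly this passage from the strict implication $\rightarrow$ to the material implication $\supset$ via $(R^{s}_{\supset})$, which lowers the weight and lets the induction close; once it is verified for both shapes of the left premise, every remaining case is analogous to~\cite{ab}.
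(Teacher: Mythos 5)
Your proof is correct and takes essentially the same route as the paper, whose own proof simply defers to Lemma 5.6 of Aboolian--Alizadeh \cite{ab}: a main induction on the weight $w(D)$ with a sub-induction on cut-height, where the crucial principal--principal case $D=C\rightarrow E$ is resolved exactly as in the paper's multi-succedent cut theorem for ${\sf GWF_{N_{2}}}$, by replacing the cut with one on $C\supset E$, which has strictly smaller weight since $w(A\rightarrow B)=w(A)+w(B)+2$ while $w(A\supset B)=w(A)+w(B)+1$. Your supplementary observations — the vacuous cuts arising when $D$ is a discarded side formula of $(R^{s}_{\rightarrow})$ or $(LR^{s}_{\rightarrow})$, and the use of hp-admissible left weakening to align contexts in the $(L^{s}_{\supset})$ permutation — are accurate and consistent with that argument.
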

\begin{proof}
The proof is similar to the proof of Lemma 5.6 in \cite{ab}.
\end{proof}
\begin{cor}
If ${\sf  GWF^{s}_{N_{2}}}\vdash \Rightarrow X\vee Y  $, then either $ {\sf  GWF^{s}_{N_{2}}}\vdash \Rightarrow X $ or $ {\sf  GWF^{s}_{N_{2}}}\vdash \Rightarrow Y $.
\end{cor}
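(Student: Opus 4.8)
The plan is to obtain the disjunction property by a purely proof-theoretic, root-first inspection of the rules of $ {\sf GWF^{s}_{N_{2}}} $, exploiting the cut admissibility already secured in Theorem~\ref{1d}. First I would invoke that admissibility: from $ {\sf GWF^{s}_{N_{2}}}\vdash\,\Rightarrow X\vee Y $ there is a \emph{cut-free} derivation $ \mathcal{D} $ of $ \Rightarrow X\vee Y $. This step is exactly what prevents the last inference from being an instance of $ Cut^{s} $ (whose premises $ \Rightarrow D $ and $ D\Rightarrow X\vee Y $ would otherwise carry no information about which disjunct is provable), and it is the only place where a nontrivial earlier result is needed.

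Next I would argue that, since the antecedent of the endsequent is empty and its succedent is the disjunction $ X\vee Y $, the last rule of $ \mathcal{D} $ can only be $ (R^{s}_{\vee_{l}}) $ or $ (R^{s}_{\vee_{r}}) $. Concretely, I would run through the rule list of Table~\ref{wfn2s}: neither initial sequent applies, because $ (id^{s}) $ and $ (L_{\bot}^{s}) $ both require a nonempty antecedent (an atom, respectively $ \bot $); every left rule $ (L^{s}_{\wedge}) $, $ (L^{s}_{\vee}) $, $ (L^{s}_{\supset}) $ is excluded for the same reason, since its principal formula sits in the antecedent of the conclusion; and each of $ (R^{s}_{\wedge}) $, $ (R^{s}_{\supset}) $, $ (R^{s}_{\to}) $, $ (LR^{s}_{\to}) $ is excluded because its principal succedent formula is a conjunction or a (strict or material) implication, never a disjunction, with $ (LR^{s}_{\to}) $ additionally needing $ C\rightarrow D $ in the antecedent. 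This leaves exactly the two right disjunction rules, whose principal formula $ X\vee Y $ is decomposed uniquely at its outermost $ \vee $.

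Finally I would finish the case split. If the last rule is $ (R^{s}_{\vee_{l}}) $, its premise is $ \Rightarrow X $, so $ {\sf GWF^{s}_{N_{2}}}\vdash\,\Rightarrow X $; if it is $ (R^{s}_{\vee_{r}}) $, its premise is $ \Rightarrow Y $, so $ {\sf GWF^{s}_{N_{2}}}\vdash\,\Rightarrow Y $. In either case the required disjunct is derivable, which is precisely the claim. I do not expect a genuine obstacle here: the single-succedent formulation makes the disjunction the \emph{only} formula on the right, so there is no ``context'' disjunction to disentangle, and the real content is packaged entirely into cut admissibility. The one point that must be stated carefully is exactly that reliance on the cut-free form, since without it an instance of $ Cut^{s} $ at the root would block the rule analysis.
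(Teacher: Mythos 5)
Your argument is correct and coincides with the proof the paper leaves implicit (the corollary is stated without a proof, directly after cut admissibility): in the single-succedent calculus a derivation of $\Rightarrow X\vee Y$ must end with $(R^{s}_{\vee_{l}})$ or $(R^{s}_{\vee_{r}})$, since every other rule either requires a nonempty antecedent or has a principal succedent formula whose outermost connective is not $\vee$, so the premise $\Rightarrow X$ or $\Rightarrow Y$ is derivable. The only cosmetic point is that $Cut^{s}$ is not a primitive rule of ${\sf GWF^{s}_{N_{2}}}$ --- the cut-admissibility theorem shows adding it would be conservative --- so every derivation is already cut-free and your preliminary appeal to cut elimination, while it correctly identifies where the real content lies, is not strictly needed for the last-rule analysis.
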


\begin{thm}
Let $ \Gamma $ be a multisubset of $ {\sf Frm_{2} }$ and $ X $ be a ${\sf  Frm_{2} }$. If ${\sf  GWF^{s}_{N_{2}}}\vdash_{n} \Gamma \Rightarrow X  $, then ${\sf  GWF_{N_{2}}}\vdash_{n} \Gamma \Rightarrow X  $.
\end{thm}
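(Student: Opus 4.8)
The statement asserts that the single-succedent calculus $\sf GWF^{s}_{N_{2}}$ is no stronger than the multi-succedent calculus $\sf GWF_{N_{2}}$ on single-conclusion sequents: every height-$n$ derivation in $\sf GWF^{s}_{N_{2}}$ of $\Gamma \Rightarrow X$ can be simulated by a height-$n$ derivation of the same sequent in $\sf GWF_{N_{2}}$. The plan is to argue by induction on the height $n$ of the $\sf GWF^{s}_{N_{2}}$-derivation, matching each single-succedent rule to the corresponding multi-succedent rule applied to a sequent whose succedent happens to be a singleton. Since a single-conclusion sequent $\Gamma \Rightarrow X$ is literally a special case of a $\sf GWF_{N_{2}}$-sequent (one where $\Delta = X$), the backbone of the induction is to show that each application of an $\sf GWF^{s}_{N_{2}}$-rule is mirrored by an application of its $\sf GWF_{N_{2}}$-counterpart without increasing the height.

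First I would handle the base case $n=0$: an initial sequent of $\sf GWF^{s}_{N_{2}}$ is either $p,\Gamma \Rightarrow p$ (an instance of $id^{s}$) or $\bot,\Gamma \Rightarrow Z$ (an instance of $L^{s}_{\bot}$), and each is directly an initial sequent $(id)$ or $(L_{\bot})$ of $\sf GWF_{N_{2}}$ with the one-element succedent, so these are derivable in $\sf GWF_{N_{2}}$ with height $0$. For the inductive step I would proceed by cases on the last rule applied. The rules $L^{s}_{\wedge}$, $R^{s}_{\wedge}$, $L^{s}_{\vee}$, $R^{s}_{\supset}$, $LR^{s}_{\to}$, and $R^{s}_{\to}$ each map straightforwardly onto $L_{\wedge}$, $R_{\wedge}$, $L_{\vee}$, $R_{\supset}$, $LR_{\to}$, and $R_{\to}$ respectively: the premises are already single-conclusion, the induction hypothesis converts them to $\sf GWF_{N_{2}}$-derivations of equal height, and applying the matching $\sf GWF_{N_{2}}$-rule reproduces the conclusion. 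The two genuinely different cases are the disjunction-on-the-right rules $R^{s}_{\vee_{l}}$ and $R^{s}_{\vee_{r}}$ and the left-implication rule $L^{s}_{\supset}$, since these do not look syntactically identical to their multi-succedent analogues.

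For $R^{s}_{\vee_{l}}$ (and symmetrically $R^{s}_{\vee_{r}}$), the premise is $\Gamma \Rightarrow X$ and the conclusion $\Gamma \Rightarrow X \vee Y$. By the induction hypothesis I obtain $\sf GWF_{N_{2}} \vdash_{n} \Gamma \Rightarrow X$; then weakening on the right (Lemma~\ref{1b}, which is hp-admissible) yields $\Gamma \Rightarrow X, Y$ at height $n$, and one application of $R_{\vee}$ gives $\Gamma \Rightarrow X \vee Y$ at height $n+1$, matching the single-succedent derivation. For $L^{s}_{\supset}$, the premises are $A\supset B, \Gamma \Rightarrow A$ and $B, \Gamma \Rightarrow Z$, while the $\sf GWF_{N_{2}}$-rule $L_{\supset}$ requires premises $\Gamma \Rightarrow \Delta, A$ and $B, \Gamma \Rightarrow \Delta$; the discrepancy is the extra $A\supset B$ in the antecedent of the first single-succedent premise. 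Here I would invoke the induction hypothesis on both premises and then eliminate the unwanted antecedent formula using the hp-admissibility of the structural rules already established for $\sf GWF_{N_{2}}$ (weakening, contraction, and in particular Lemma~\ref{1} or the inversion lemma as needed), so that the multi-succedent $L_{\supset}$ can be applied with the required premise shapes at no extra height cost.

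The main obstacle will be the $L^{s}_{\supset}$ case, because the single-succedent left-implication rule carries its own principal formula $A\supset B$ back into the antecedent of the left premise, a feature absent from the multi-succedent $L_{\supset}$. Reconciling this requires carefully showing that the copy of $A\supset B$ can be discharged without inflating the derivation height, leaning on the hp-admissibility results proved earlier in the paper; this is the one step where the two calculi genuinely diverge and where the height-preservation claim must be checked rather than read off directly.
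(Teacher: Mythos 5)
Your proposal is correct and takes essentially the same approach as the paper, whose proof is precisely this height-preserving induction on the derivation, simulating each single-succedent rule by its multi-succedent counterpart (the paper delegates the details to Theorem 5.8 of Aboolian--Alizadeh). The one nontrivial case you flag, $L^{s}_{\supset}$, does close with the tools you name: either weaken both translated premises, apply $L_{\supset}$ with the extra copy of $A\supset B$ kept in the context and remove the duplicate by hp-admissible left contraction, or use hp-invertibility of $L_{\supset}$ plus right contraction and weakening, so the height bound $n$ is preserved in every case.
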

\begin{proof}
The proof is by induction on the height of derivation of ${\sf  GWF^{s}_{N_{2}}}\vdash \Gamma \Rightarrow X  $ and similar to the proof of Theorem 5.8 in  \cite{ab}.
\end{proof}

\begin{thm}
Let $ \Gamma $ be a finite multisubset of $ {\sf Frm_{2} }$ and $ \Delta $  a  finite multiset of $ {\sf Frm }$. If ${\sf  GWF_{N_{2}}}\vdash \Gamma \Rightarrow \Delta  $, then ${\sf  GWF^{s}_{N_{2}}}\vdash \Gamma \Rightarrow \bigvee\Delta$.
\end{thm}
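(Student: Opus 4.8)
The plan is to argue by induction on the height of the given derivation of $\Gamma \Rightarrow \Delta$ in ${\sf GWF_{N_2}}$, translating each rule into a block of ${\sf GWF^s_{N_2}}$-inferences whose conclusion is $\Gamma \Rightarrow \bigvee\Delta$ (with the conventions $\bigvee\emptyset = \bot$ and $\bigvee\{D\} = D$). Throughout I would freely use the single-succedent identity lemma (Lemma~\ref{aa}), hp-admissibility of left weakening (Lemma~\ref{bb}), and admissibility of cut in ${\sf GWF^s_{N_2}}$ (Theorem~\ref{1d}). Before the induction I would record two routine auxiliary facts about the single succedent, each proved by induction on $|\Theta|$ using only the disjunction rules: (H1) if ${\sf GWF^s_{N_2}} \vdash \Gamma \Rightarrow X$ and $X$ occurs in the multiset $\Theta$, then ${\sf GWF^s_{N_2}} \vdash \Gamma \Rightarrow \bigvee\Theta$ (repeated $(R^s_{\vee_l})$/$(R^s_{\vee_r})$); and (H2) if $d, \Gamma \Rightarrow Z$ is derivable for every $d \in \Theta$, then $\bigvee\Theta, \Gamma \Rightarrow Z$ is derivable (repeated $(L^s_\vee)$, with $(L^s_\bot)$ covering $\Theta = \emptyset$). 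I would also check, case by case, that the restriction $\Delta \subseteq {\sf Frm}$ propagates to the premises of every rule so that the induction hypothesis always applies: for $(R_\wedge)$ and $(R_\vee)$ the active formulas lie in ${\sf Frm}$ because the principal one does, and for $(R_\to)$, $(LR_\to)$ and $(L_\supset)$ the relevant premise succedents are single ${\sf Frm}$-formulas or $\Delta$ extended by the ${\sf Frm}$-formula $A$.

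The easy cases are the initial sequents and the rules that leave the succedent untouched or merely replace it. For $(id)$ the sequent $p, \Gamma \Rightarrow \Delta, p$ is obtained from $(id^s)$ followed by (H1); $(L_\bot)$ is immediate from $(L^s_\bot)$ with $Z = \bigvee\Delta$. The left rules $(L_\wedge)$ and $(L_\vee)$ carry over verbatim: applying the induction hypothesis to the premise(s) and then the corresponding rule $(L^s_\wedge)$ or $(L^s_\vee)$ yields $\Gamma \Rightarrow \bigvee\Delta$. For the two implication-introduction rules, note that $(R^s_\to)$ and $(LR^s_\to)$ discard the right context entirely: the premise of $(R_\to)$, resp.\ $(LR_\to)$, is already single-succedent, so the induction hypothesis gives $A \Rightarrow B$, resp.\ $C \supset D, A \Rightarrow B$, and one application of $(R^s_\to)$, resp.\ $(LR^s_\to)$, produces $\Gamma \Rightarrow A \to B$, resp.\ $\Gamma, C \to D \Rightarrow A \to B$; a final appeal to (H1) injects this into $\bigvee(\Delta, A \to B)$.

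The substantive work is in $(R_\wedge)$, $(R_\vee)$ and $(L_\supset)$, where the multi-succedent context $\Delta$ must be reassembled inside a single disjunction; here I would combine cut with (H1) and (H2). For $(L_\supset)$ the induction hypothesis gives $\Gamma \Rightarrow \bigvee(\Delta, A)$ and $B, \Gamma \Rightarrow \bigvee\Delta$; after weakening $A \supset B$ into the first sequent I cut on $\bigvee(\Delta, A)$ against $\bigvee(\Delta, A), A\supset B, \Gamma \Rightarrow \bigvee\Delta$, and the latter I obtain by decomposing the disjunction with (H2): each disjunct from $\Delta$ is handled by identity and (H1), while the disjunct $A$ is handled by $(L^s_\supset)$ applied to $A \supset B$, whose premises $A\supset B, A, \Gamma \Rightarrow A$ and $B, A, \Gamma \Rightarrow \bigvee\Delta$ come from Lemma~\ref{aa} and from the second induction hypothesis plus weakening. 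The cases $(R_\vee)$ and $(R_\wedge)$ follow the same decompose-by-(H2)/reassemble-by-(H1) template: for $(R_\vee)$ a single cut converts $\Gamma \Rightarrow \bigvee(\Delta, X, Y)$ into $\Gamma \Rightarrow \bigvee(\Delta, X \vee Y)$, and for $(R_\wedge)$ a nested pair of cuts on $\bigvee(\Delta, X)$ and $\bigvee(\Delta, Y)$ lets one derive $X \wedge Y$ under the two hypotheses $X$ and $Y$ via $(R^s_\wedge)$ before re-injecting it with (H1).

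I expect $(R_\wedge)$ and $(L_\supset)$ to be the main obstacle: they are precisely the points where a genuinely multi-conclusion inference is simulated single-conclusion, and making them go through rests essentially on cut-admissibility (Theorem~\ref{1d}) together with the disjunction-manipulation lemmas (H1) and (H2) — in effect, on the single-succedent calculus proving the relevant distributivity and associativity/commutativity properties of $\vee$. The remaining bookkeeping, chiefly verifying that the $\bigvee$-formulas introduced stay within ${\sf Frm_2}$ and that the ${\sf Frm}$-restriction on $\Delta$ is preserved upward through the derivation, is routine.
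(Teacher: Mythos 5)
Your proof is correct and takes essentially the same route as the paper, which likewise argues by induction on the height of the ${\sf GWF_{N_{2}}}$-derivation and defers the rule-by-rule simulation to Theorem 5.9 of \cite{ab}; your auxiliary lemmas (H1)/(H2) together with cut are precisely the disjunction-manipulation machinery that argument uses, and your observation that the ${\sf Frm}$-restriction on $\Delta$ rules out $(R_{\supset})$ and propagates to all premises is the right bookkeeping. The only detail you leave implicit is that the context-splitting $Cut^{s}$ duplicates $\Gamma$ in the $(R_{\wedge})$ and $(L_{\supset})$ cases, so the hp-admissible left contraction of ${\sf GWF^{s}_{N_{2}}}$ (also established in the paper) is needed at the end to recover $\Gamma \Rightarrow \bigvee\Delta$.
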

\begin{proof}
The proof is by induction on the height of the derivation of $\Gamma \Rightarrow \Delta  $ and similar to the proof of Theorem 5.9 in  \cite{ab}.
\end{proof}

\begin{cor}
Let $ \Gamma $ be a finite subset of $ {\sf Frm }$.

$$ \vdash_{{\sf  WF_{N_{2}}}}  \bigwedge \Gamma \rightarrow A ~~~\Leftrightarrow~~~{\sf  GWF_{N_{2}}}\vdash \Gamma \Rightarrow A ~~~\Leftrightarrow~~~{\sf  GWF^{s}_{N_{2}}}\vdash \Gamma \Rightarrow A .$$
\end{cor}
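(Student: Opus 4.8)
The plan is to read off both equivalences by specializing the succedent to a single formula and chaining the four transfer results already proved, so that no new induction is required. Throughout, the key observation is that a single formula $A\in{\sf Frm}$ is a one-element multiset $\Delta=\{A\}$ with $\bigvee\Delta=A$, and that ${\sf Frm}\subseteq{\sf Frm_2}$, so the domain hypotheses of each cited theorem are met.

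First I would establish the left equivalence $\vdash_{{\sf WF_{N_2}}}\bigwedge\Gamma\rightarrow A$ iff ${\sf GWF_{N_2}}\vdash\Gamma\Rightarrow A$. Since $\Gamma$ and $\{A\}$ are finite subsets of ${\sf Frm}$ and $\bigvee\{A\}=A$, this is exactly the instance $\Delta:=\{A\}$ of the two directions (Theorems~\ref{3a}) relating ${\sf WF_{N_2}}$-derivability of $\bigwedge\Gamma\rightarrow\bigvee\Delta$ to ${\sf GWF_{N_2}}$-derivability of $\Gamma\Rightarrow\Delta$. The forward direction rests on the simulation of every axiom and rule of ${\sf WF_{N_2}}$ inside ${\sf GWF_{N_2}}$, and the backward direction on the rule-by-rule translation whose only nontrivial case is $(LR_{\rightarrow})$, both already carried out above.

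Next I would establish the right equivalence ${\sf GWF_{N_2}}\vdash\Gamma\Rightarrow A$ iff ${\sf GWF^{s}_{N_2}}\vdash\Gamma\Rightarrow A$. For the direction from the single-succedent to the multi-succedent calculus, I apply the transfer theorem asserting that ${\sf GWF^{s}_{N_2}}\vdash_{n}\Gamma\Rightarrow X$ implies ${\sf GWF_{N_2}}\vdash_{n}\Gamma\Rightarrow X$, taking $X:=A\in{\sf Frm}\subseteq{\sf Frm_2}$. For the converse, I apply the theorem asserting that ${\sf GWF_{N_2}}\vdash\Gamma\Rightarrow\Delta$ implies ${\sf GWF^{s}_{N_2}}\vdash\Gamma\Rightarrow\bigvee\Delta$, again with $\Delta:=\{A\}$ and $A\in{\sf Frm}$, so that $\bigvee\Delta=A$; the hypothesis that $\Gamma$ is a multisubset of ${\sf Frm_2}$ is met since $\Gamma\subseteq{\sf Frm}\subseteq{\sf Frm_2}$. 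Chaining this with the left equivalence yields the displayed three-way equivalence.

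I expect no substantial obstacle: the corollary is purely a matter of instantiating $\Delta$ to a singleton and checking that the formula-class side conditions ($A\in{\sf Frm}$ and $\Gamma\subseteq{\sf Frm}\subseteq{\sf Frm_2}$) place each step within the scope of an already-proved theorem. The only points that warrant a line of care are the collapses $\bigvee\{A\}=A$ and the legitimacy of a singleton succedent in the multi-succedent calculus, both of which are immediate.
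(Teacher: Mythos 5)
Your proposal is correct and matches the paper's intent exactly: the corollary is stated without proof precisely because it follows by instantiating $\Delta:=\{A\}$ in the two theorems relating $\vdash_{{\sf WF_{N_2}}}$ to ${\sf GWF_{N_2}}$ and chaining with the two transfer theorems between ${\sf GWF_{N_2}}$ and ${\sf GWF^{s}_{N_2}}$, with the side conditions ($A\in{\sf Frm}$, $\Gamma\subseteq{\sf Frm}\subseteq{\sf Frm_2}$, $\bigvee\{A\}=A$) checked just as you do.
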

\begin{cor}
If ~$ \vdash_{{\sf  WF_{N_{2}}}} A\vee B  $ ~ then~ $ \vdash_{{\sf  WF_{N_{2}}}} A $~ or~  $ \vdash_{{\sf  WF_{N_{2}}}}  B  $.
\end{cor}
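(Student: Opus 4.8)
The plan is to derive the disjunction property for ${\sf WF_{N_{2}}}$ as an immediate consequence of two results already in hand: the equivalence $\vdash_{{\sf WF_{N_{2}}}}\bigwedge\Gamma\rightarrow A \Leftrightarrow {\sf GWF^{s}_{N_{2}}}\vdash\Gamma\Rightarrow A$ (for finite $\Gamma\subseteq{\sf Frm}$ and $A\in{\sf Frm}$), and the disjunction property already established for the single-succedent calculus, namely that ${\sf GWF^{s}_{N_{2}}}\vdash\,\Rightarrow X\vee Y$ forces ${\sf GWF^{s}_{N_{2}}}\vdash\,\Rightarrow X$ or ${\sf GWF^{s}_{N_{2}}}\vdash\,\Rightarrow Y$. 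The entire argument is a chaining of these facts, specialised to the empty antecedent, so the work is purely in routing a bare theorem through the single-succedent sequent calculus and back.

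First I would assume $\vdash_{{\sf WF_{N_{2}}}}A\vee B$. Since $A,B\in{\sf Frm}$ and ${\sf Frm}$ is closed under $\vee$, we have $A\vee B\in{\sf Frm}$, so the equivalence corollary applies with $\Gamma=\emptyset$ and transports this theorem into the single-succedent calculus, giving ${\sf GWF^{s}_{N_{2}}}\vdash\,\Rightarrow A\vee B$. Next I would invoke the disjunction property for ${\sf GWF^{s}_{N_{2}}}$ (which itself rests on the cut-admissibility theorem for that calculus): from ${\sf GWF^{s}_{N_{2}}}\vdash\,\Rightarrow A\vee B$ we obtain ${\sf GWF^{s}_{N_{2}}}\vdash\,\Rightarrow A$ or ${\sf GWF^{s}_{N_{2}}}\vdash\,\Rightarrow B$. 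Applying the equivalence corollary once more, now from the sequent calculus back to the Hilbert system and again with $\Gamma=\emptyset$, each of these disjuncts yields the corresponding theorem, so $\vdash_{{\sf WF_{N_{2}}}}A$ or $\vdash_{{\sf WF_{N_{2}}}}B$, as required.

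The only genuinely delicate point is the bookkeeping at the empty antecedent: the equivalence corollary is phrased for $\bigwedge\Gamma\rightarrow A$, whereas here I need it for a bare theorem $\vdash_{{\sf WF_{N_{2}}}}C$. I would close this gap by reading $\bigwedge\emptyset$ as $\top$ and recording the small observation that $\vdash_{{\sf WF_{N_{2}}}}C$ iff $\vdash_{{\sf WF_{N_{2}}}}\top\rightarrow C$. The forward direction is a single instance of the a fortiori rule (AF), which from $C$ infers $\top\rightarrow C$; the converse follows by modus ponens from $\vdash_{{\sf WF_{N_{2}}}}\top$ (available since $\top$ can be taken as the theorem $\bot\rightarrow\bot$, or any instance of $A\rightarrow A$), and this use of MP is legitimate because no assumptions are present and MP is unrestricted on theorems. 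With this identification in place, the two applications of the equivalence corollary go through verbatim, and the chaining completes the proof without any further calculation.
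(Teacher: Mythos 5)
Your proof is correct and is essentially the paper's intended argument: the corollary is placed immediately after the equivalence $\vdash_{{\sf WF_{N_{2}}}}\bigwedge\Gamma\rightarrow A \Leftrightarrow {\sf GWF^{s}_{N_{2}}}\vdash \Gamma\Rightarrow A$ and the disjunction property for ${\sf GWF^{s}_{N_{2}}}$, and your chaining of these two results through the single-succedent calculus (with $\Gamma=\emptyset$) is exactly the derivation the paper leaves implicit. Your careful handling of the empty antecedent via $\top:=\bot\rightarrow\bot$, the a fortiori rule, and unrestricted MP on theorems is a legitimate and welcome piece of bookkeeping, not a deviation.
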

 
\section{Conclusion}
In this paper, a cut-free G3-style sequent calculus ${\sf GWF_{N_{2}}}$ for the subintuitionistic logic ${\sf WF_{N_{2}}}$, along with its single-succedent variant ${\sf GWF^{s}_{N_{2}}}$, is introduced. The initial goal of introducing such a system was to prove the Lyndon's and Craig's interpolation properties for the subintuitionistic logic ${\sf WF_{N_{2}}} $, similar to the approach used by Aboolian and Alizadeh in~\cite{ab}. However, as our work progressed, we realized that we could not apply the same method to prove the interpolation property for ${\sf WF_{N_{2}}} $, and that we need to explore different methods or systems to establish this property. Therefore, one of our main objectives in future work will be to investigate whether we can prove the interpolation property for the subintuitionistic logic ${\sf WF_{N_{2}}}$.

\label{references}
\

\end{document}